\newtheorem{thm}{Theorem}[section]
\newtheorem{lem}[thm]{Lemma}
\newtheorem{cor}[thm]{Corollary}
\newtheorem{prop}[thm]{Proposition}
\newtheorem{exmp}[thm]{Example}
\newtheorem{rmk}[thm]{Remark}
\newtheorem{ques}[thm]{Question}
\newtheorem{thm-con}[thm]{Theorem-Conjecture}
\numberwithin{equation}{section}
\theoremstyle{definition}
\newcommand{\x}{{\tt x}}
\newcommand{\T}{{\tt t}}
\begin{document}

\title{On Global $\mathcal P$-Forms}

\author[Xiang-dong Hou]{Xiang-dong Hou}
\address{Department of Mathematics and Statistics,
University of South Florida, Tampa, FL 33620}
\email{xhou@usf.edu}
\thanks{* Research partially supported by NSA Grant H98230-12-1-0245.}

\keywords{Cremaona group, global $\mathcal P$-form, finite field, permutation polynomial}

\subjclass[2000]{11T06, 11R27, 14E07}

\begin{abstract}
Let $\Bbb F_q$ be a finite field with $\text{char}\,\Bbb F_q=p$ and $n>0$ an integer with $\text{gcd}(n, \log_pq)=1$. Let $(\ )^*:\Bbb F_q(\x_0,\dots,\x_{n-1})\to\Bbb F_q(\x_0,\dots,\x_{n-1})$ be the $\Bbb F_q$-monomorphism defined by $\x_i^*=\x_{i+1}$ for $0\le i< n-1$ and $\x_{n-1}^*=\x_0^q$. For $f,g\in\Bbb F_q(\x_0,\dots,\x_{n-1})\setminus\Bbb F_q$, define $f\circ g=f(g,g^*,\dots,g^{(n-1)*})$. Then $(\Bbb F_q(\x_0,\dots,\x_{n-1})\setminus\Bbb F_q,\,\circ)$ is a monoid whose invertible elements are called global $\mathcal P$-forms. Global $\mathcal P$-forms were first introduced by H. Dobbertin in 2001 with $q=2$ to study certain type of permutation polynomials of $\Bbb F_{2^m}$ with $\text{gcd}(m,n)=1$; global $\mathcal P$-forms with $q=p$ for an arbitrary prime $p$ were considered by W. More in 2005. In this paper, we discuss some fundamental questions about global $\mathcal P$-forms, some of which are answered and others remain open. 
\end{abstract}

\maketitle

\section{introduction}

Let $F$ be a field and $F(\x_0,\dots,\x_{n-1})$ the field of rational functions in $\x_0,\dots,\x_{n-1}$ over $F$. Let $u>0$ be an integer such that $x^n-u$ is irreducible over $\Bbb Q$, which is equivalent to saying that for every prime divisor $r$ of $n$, $u$ is not an $r$th power of an integer \cite[Theorem~8.1.6]{Kar89}. Let 
\[
(\ )^*:\ F(\x_0,\dots,\x_{n-1})\longrightarrow F(\x_0,\dots,\x_{n-1})
\]
be the $F$-monomorphism defined by $\x_i^*=\x_{i+1}$ for $0\le i< n-1$ and $\x_{n-1}^*=\x_0^u$. For $f\in F(\x_0,\dots,\x_{n-1})$ and $i\ge 0$, we write
\[
f^{\overbrace{*\cdots*}^i}=f^{i*}.
\]
For $f,g\in F(\x_0,\dots,\x_{n-1})$, we define
\begin{equation}\label{1.1}
f\circ g=f(g,g^*,\dots,g^{(n-1)*})
\end{equation}
whenever the right side is meaningful. Writing $f=\frac{f_1}{f_2}$, where $f_1,f_2\in F[\x_0,\dots,\x_{n-1}]$ with $f_2\ne 0$, we have 
\[
f\circ g=\frac{f_1(g,g^*,\dots,g^{(n-1)*})}{f_2(g,g^*,\dots,g^{(n-1)*})}.
\]
Hence for $f\circ g$ to be defined for all $f\in F(\x_0,\dots,\x_{n-1})$, it is necessary and sufficient that $g,g^*,\dots,g^{(n-1)*}$ be algebraically independent over $F$. We will see in Section~3 that $g,g^*,\dots,g^{(n-1)*}$ are algebraically independent over $F$ if and only if $g$ is not a constant.

We are primarily interested in the case $F=\Bbb F_q$ with $\text{char}\,\Bbb F_q=p$, $u=q$, and $\text{gcd}(n,\log_pq)=1$. In this case, we also see that $(\Bbb F_q(\x_0,\dots,\x_{n-1})\setminus\Bbb F_q,\,\circ)$ forms a monoid with identity $\x_0$. The invertible elements of this monoid are called {\em global $\mathcal P$-forms} (in $\x_0,\dots,\x_{n-1}$ over $\Bbb F_q$) and the group they form is denoted by $\mathcal G(n,q)$.

Global $\mathcal P$-forms were first introduced by Dobbertin in \cite{Dob02} with $q=2$ and were later generalized by More \cite{Mor06} to the case $q=p$ for an arbitrary prime $p$. The motivation of this notion, according to \cite{Dob02}, is in the study of certain permutation polynomials of finite fields that possess a ``uniform representation''. More precisely, let $n,n',m$ be positive integers such that $nn'\equiv 1\pmod m$ and let $f\in\mathcal G(n,q)$. Denote the inverse of $f$ in $\mathcal G(n,q)$ by $f^{(-1)}$ and put $\widetilde f=f(\x,\x^{q^{n'}},\dots,\x^{q^{(n-1)n'}})\in\Bbb F_q(\x)$. (Here we need to assume that the denominator of $f(\x,\x^{q^{n'}},\dots,\x^{q^{(n-1)n'}})$ is not $0$. By choosing $n'$ large enough subject to the condition $nn'\equiv 1\pmod m$, this requirement is satisfied.) Let $D$ denote the set of all $x\in\Bbb F_{q^m}$ at which the rational functions $\widetilde f$ and $\widetilde{f^{(-1)}}\circ \widetilde f\in\Bbb F_q(\x)$ are both defined. Then for all $x\in D$ we have 
\[
\begin{split}
\widetilde{f^{(-1)}}\bigl(\widetilde f(x)\bigr)\,
&=f^{(-1)}\bigl(\widetilde f(x),\widetilde f(x)^{q^{n'}},\dots,\widetilde f(x)^{q^{(n-1)n'}}\bigr)\cr
&=f^{(-1)}\bigl(f(x,x^{q^{n'}},\dots,x^{q^{(n-1)n'}}),f(x^{q^{n'}},x^{q^{2n'}},\dots,x^{q^{(n-1)n'}},x^q),\dots,\cr
&\kern 1.45cm f(x^{q^{(n-1)n'}},x^q,x^{q^{1+n'}},\dots,x^{q^{1+(n-2)n'}})\bigr)\cr
&=f^{(-1)}\bigl(f(x,x^{q^{n'}},\dots,x^{q^{(n-1)n'}}),f^*(x,x^{q^{n'}},\dots,x^{q^{(n-1)n'}}),\dots,\cr
&\kern 1.45cm f^{(n-1)*}(x,x^{q^{n'}},\dots,x^{q^{(n-1)n'}})\bigr)\cr
&=(f^{(-1)}\circ f)(x,x^{q^{n'}},\dots,x^{q^{(n-1)n'}})\cr
&=x.
\end{split}
\]
In particular, $\widetilde f$ is one-to-one on $D$. Write $\widetilde f=\frac{f_1}{f_2}$ where $f_1,f_2\in\Bbb F_q[\x]$, $f_2\ne 0$, $\text{gcd}(f_1,f_2)=1$, and put $g=f_1f_2^{q^m-2}\in\Bbb F_q[\x]$. Then $g$ is one-to-one on $D$. If, in addition, one can show that $g$ is also one-to-one on $\Bbb F_{q^m}\setminus D$ and $g(\Bbb F_{q^m}\setminus D)\cap g(D)=\emptyset$, it follows that $g$ is a permutation polynomial of $\Bbb F_{q^m}$. In \cite{Dob99}, Dobbertin found a formula for the inverse of the Kasami permutation polynomial. The proof in \cite{Dob99} implicitly relied on an extraordinary global $\mathcal P$-form $Q_n\in\mathcal G(n,2)$ and its inverse, both of which were made explicit later in \cite{Dob02}. (We will revisit the global $\mathcal P$-form $Q_n$ in detail shortly.)

It is clear that $\mathcal G(1,q)\cong \text{PGL}(2,q)$. For $m\mid n$, there is a natural embedding $\mathcal G(m,q)\hookrightarrow \mathcal G(n,q)$. There is another embedding 
\[
\begin{array}{ccccc}
\phi_n:&\Bbb Z[q^{\frac 1n}]^\times&\longrightarrow&\mathcal G(n,q)\cr
&a_0+a_1q^{\frac 1n}+\cdots+a_{n-1}q^{\frac{n-1}n}&\longmapsto&\x_0^{a_0}\cdots\x_{n-1}^{a_{n-1}},& a_0,\dots,a_n\in\Bbb Z.
\end{array}
\]
The image of $\phi_n$ consists of all global $\mathcal P$-forms that are rational monomials. A fundamental question is whether $\mathcal G(n,q)$ is generated by $\mathcal G(1,q)$ and $\text{Im}\,\phi_n$. The answer is not known.

In \cite{Dob02}, Dobbertin proved that 
\begin{equation}\label{1.2}
Q_n:=\frac 1{\x_0\x_1}\Bigl(\x_0^2+\sum_{i=1}^{n-1}\x_i+n+1\Bigr)\in\mathcal G(n,2).
\end{equation}
Moreover,
\begin{equation}\label{1.3}
Q_n^{(-1)}=\sum_{i=1}^n\sum_{e_0,\dots,e_{i-1}}\x_0^{e_0}\x_1^{e_1}\cdots\x_{i-1}^{e_{i-1}},
\end{equation}
where $e_0,\dots,e_{i-1}\in\{\pm1\}$ are subject to the conditions $e_0=-1$ (when $i<n$), $e_0=\pm 1$ (when $i=n$), $e_{i-1}=-1$, and $(e_{j-1},e_j)\ne(1,1)$ for all $0<j<i$. It is not known if $Q_n\in\langle\mathcal G(1,n)\cup\text{Im}\,\phi_n\rangle$. It follows from \eqref{1.2} and \eqref{1.3} (or by direct computation) that $o(Q_2)=2$. Dobbertin \cite{Dob02} posed the question whether $o(Q_n)$ is infinite for $n>2$. We will prove that $o(Q_n)=\infty$ for $n>2$.

The paper is organized as follows. In Section 2 we assume that $F$ is an arbitrary field and $x^n-u\in\Bbb Z[\x]$ is irreducible over $\Bbb Q$, where $n$ and $u$ are positive integers. We introduce the notion of a $\Bbb Z[u^{\frac 1n}]$-valued degree for functions in $F(\x_0,\dots,\x_{n-1})$, which allows us to prove that if $g\in F(\x_0,\dots,\x_{n-1})\setminus F$, then $g,g^*,\dots,g^{(n-1)*}$ are algebraically independent over $F$. In Section 3, we assume that $F=\Bbb F_q$, $u=q$, and $\text{gcd}(n,\log_pq)=1$, where $p=\text{char}\,\Bbb F_q$. After a discussion of the basic properties of global $\mathcal P$-forms, we prove that $o(Q_n)=\infty$ for $n>3$. The proof is based on the computation of the $\Bbb Z[2^{\frac 1n}]$-valued degree of $Q_n$. Section 4 is devoted the structure of the group $\mathcal G(n,q)$ of global $\mathcal P$-forms. Several embeddings are described:  $\mathcal G(m,q)\hookrightarrow\mathcal G(n,q)$, where $m\mid n$; $\Bbb Z[q^{\frac 1n}]^\times\hookrightarrow\mathcal G(n,q)$; $\mathcal G(n,q)\hookrightarrow\text{Cr}_n(\Bbb F_q)$, where $\text{Cr}_n(\Bbb F_q)=\text{Aut}(\Bbb F_q(\x_0,\dots,\x_{n-1})/\Bbb F_q)$ is the Cremona group of $\Bbb F_q$ in $n$ dimensions. There are two degree functions $d_{\max},\ d_{\min}:\mathcal G(n,q)\to\Bbb Z[q^{\frac 1n}]^\times\cup\{0\}$, and $\mathcal H(n,q):=\{f\in\mathcal G(n,q):d_{\max}(f)d_{\min}(f)>0\}$ is a subgroup of $\mathcal G(n,q)$ of finite index. Left coset representatives of $\mathcal H(n,q)$ in $\mathcal G(n,q)$ are determined and so is the structure of the quotient group $\mathcal H(n,q)/\{f\in\mathcal H(n,q):d_{\max}(f)=d_{\min}(f)=1\}$. Section 4 also contains several open questions that are fundamental for a better understanding of the group $\mathcal G(n,q)$.  


\section{The $\Bbb Z[u^{\frac 1n}]$-Valued Degree}

Let $F$ be a field and let $n$ and $u$ be positive integers such that $\x^n-u$ is irreducible over $\Bbb Q$. For $0\ne f=\sum_{e_0,\dots,e_{n-1}\ge 0}c_{e_0,\dots,e_{n-1}}\x_0^{e_0}\cdots\x_{n-1}^{e_{n-1}}\in F[\x_0,\dots,\x_{n-1}]$, define
\begin{equation}\label{2.1}
d(f)=\max\{e_0+e_1u^{\frac 1n}+\cdots+e_{n-1}u^{\frac{n-1}n}:c_{e_0,\dots,e_{n-1}}\ne 0\}.
\end{equation}
We also define $d(0)=-\infty$. It is obvious that
\[
d(fg)=d(f)+d(g)\quad\text{for all}\ f,g\in F[\x_0,\dots,\x_{n-1}].
\]
The next result is more interesting.

\begin{lem}\label{L2.1}
Let $f,g\in F[\x_0,\dots,\x_{n-1}]$ be such that $d(g)>0$. Then 
\begin{equation}\label{2.2}
d(f\circ g)=d(f)d(g),
\end{equation}
where $f\circ g$ is defined in \eqref{1.1}.
\end{lem}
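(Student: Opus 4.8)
The plan is to refine the additive identity $d(fg)=d(f)+d(g)$ into a statement about leading monomials, and then track how $*$ and the composition $\circ$ act on them. First I would record the arithmetic input: since $\x^n-u$ is irreducible over $\Bbb Q$, the elements $1,u^{\frac1n},\dots,u^{\frac{n-1}n}$ are linearly independent over $\Bbb Q$, hence over $\Bbb Z$. Consequently the \emph{weight} $w(e_0,\dots,e_{n-1})=e_0+e_1u^{\frac1n}+\cdots+e_{n-1}u^{\frac{n-1}n}$ is an injective function of the exponent tuple $(e_0,\dots,e_{n-1})\in\Bbb Z_{\ge0}^n$. Therefore the maximum in \eqref{2.1} is attained at a \emph{unique} monomial, which I will call the leading monomial $\mathrm{LM}(f)$, with a nonzero leading coefficient $\mathrm{LC}(f)\in F$.

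Using uniqueness I would upgrade $d(fg)=d(f)+d(g)$ to the multiplicativity of leading terms: in $fg$ the coefficient of $\mathrm{LM}(f)\mathrm{LM}(g)$ equals $\mathrm{LC}(f)\mathrm{LC}(g)$, because any other pair of monomials whose product has weight $d(f)+d(g)$ would be forced to have weights $d(f)$ and $d(g)$ separately, hence to be the leading monomials themselves. Next I would analyze $*$. A direct computation shows that for a monomial $m=\x_0^{c_0}\cdots\x_{n-1}^{c_{n-1}}$ one has $m^*=\x_0^{uc_{n-1}}\x_1^{c_0}\cdots\x_{n-1}^{c_{n-2}}$, so $w(m^*)=u^{\frac1n}w(m)$. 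Since $u^{\frac1n}>0$, the map $*$ preserves the strict order of weights among monomials and sends distinct monomials to distinct monomials; hence $\mathrm{LM}(f^*)=\mathrm{LM}(f)^*$, $\mathrm{LC}(f^*)=\mathrm{LC}(f)$, and $d(f^*)=u^{\frac1n}d(f)$. Iterating gives $d(f^{i*})=u^{\frac in}d(f)$ for all $i\ge0$.

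With these tools the main computation is short. Writing $f=\sum_e c_e\,\x_0^{e_0}\cdots\x_{n-1}^{e_{n-1}}$, I expand
\[
f\circ g=\sum_e c_e\,g^{e_0}(g^*)^{e_1}\cdots(g^{(n-1)*})^{e_{n-1}}.
\]
By the multiplicativity of $d$ and the formula for $d(g^{i*})$, the summand indexed by $e$ has degree $e_0d(g)+e_1u^{\frac1n}d(g)+\cdots+e_{n-1}u^{\frac{n-1}n}d(g)=w(e)\,d(g)$. Because $d(g)>0$, multiplication by $d(g)$ preserves the strict order of the weights $w(e)$; as these weights are pairwise distinct, the summand of largest weight---the one indexed by the exponent tuple of $\mathrm{LM}(f)$, of weight $d(f)$---has degree $d(f)d(g)$, strictly exceeding every other summand's degree. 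The monomial realizing this top degree therefore occurs in a single summand, so its coefficient in $f\circ g$ is $c_e\,\mathrm{LC}(g)^{e_0+\cdots+e_{n-1}}\ne0$ and survives.

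The one point requiring care---and the crux of the argument---is ruling out cancellation of the top-degree term: a priori the leading monomials of different summands could coincide, or the leading coefficient could vanish. Both are excluded by the two structural facts above: the weights $w(e)$ are pairwise distinct by linear independence, and the hypothesis $d(g)>0$ promotes this to a strict separation of the summands' degrees, isolating the top-degree monomial in one summand with nonzero coefficient. Once leading terms are shown to behave multiplicatively under $*$ and $\circ$, the remaining steps are routine, and the conclusion $d(f\circ g)=d(f)d(g)$ follows.
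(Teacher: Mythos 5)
Your proof is correct and takes essentially the same approach as the paper's: establish $d(g^{i*})=u^{\frac in}d(g)$, expand $f\circ g$ into summands of degree $(e_0+e_1u^{\frac 1n}+\cdots+e_{n-1}u^{\frac{n-1}n})d(g)$, and use the pairwise distinctness of these values (from linear independence of $1,u^{\frac 1n},\dots,u^{\frac{n-1}n}$ together with $d(g)>0$) to conclude that the summand coming from the leading term of $f$ dominates, giving $d(f\circ g)=d(f)d(g)$. The only difference is that you make explicit the uniqueness of leading monomials and the no-cancellation argument that the paper leaves implicit.
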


\begin{proof}It is clear that $d(g^*)=u^{\frac 1n}d(g)$. Thus in general we have $d(g^{i*})=u^{\frac in}d(g)$ for $i\ge 0$. Let $c\,\x_0^{e_0}\cdots\x_{n-1}^{e_{n-1}}$ ($c\in F\setminus\{0\}$) be the leading term of $f$ with respect to $d$. Then the leading term of $f\circ g$ equals that of $cg^{e_0}(g^*)^{e_1}\cdots(g^{(n-1)*})^{e_{n-1}}$. Note that 
\[
\begin{split}
d\bigl(g^{e_0}(g^*)^{e_1}\cdots(g^{(n-1)*})^{e_{n-1}}\bigr)\,&=e_0d(g)+e_1u^{\frac 1n}d(g)+\cdots+e_{n-1}u^{\frac{n-1}n}d(g)\cr
&=(e_0+e_1u^{\frac 1n}+\cdots+e_{n-1}u^{\frac{n-1}n})d(g)\cr
&=d(f)d(g).
\end{split}
\]
\end{proof}

For $f=\frac{f_1}{f_2}\in F(\x_0,\dots,\x_{n-1})$, where $f_1,f_2\in F[\x_0,\dots,\x_{n-1}]$, $f_2\ne 0$, we define
\[
d(f)=d(f_1)-d(f_2).
\]
Then the function $d:F(\x_0,\dots,\x_{n-1})\to\Bbb Z[u^{\frac 1n}]\cup\{-\infty\}$ has the following properties. For $f,g\in F(\x_0,\dots,\x_{n-1})$,
\begin{itemize}
  \item [(i)] $d(fg)=d(f)+d(g)$;
  \item [(ii)] $d(f+g)\le \max(d(f),d(g))$ and if $d(f)\ne d(g)$, then $d(f+g)=\max(d(f),d(g))$.
\end{itemize} 
In fact, $-d$ is a valuation of $F(\x_0,\dots,\x_{n-1})$ with the value group $(\Bbb Z[u^{\frac 1n}],+,\le)$. Lemma~\ref{L2.1} is still valid for $g\in F(\x_0,\dots,\x_{n-1})$ with $d(g)>0$.

\begin{lem}\label{L2.2} Let $f\in F[\x_0,\dots,\x_{n-1}]$ and $g\in F(\x_0,\dots,\x_{n-1})$ be such that $d(g)>0$. Then 
\begin{equation}\label{2.3}
d(f\circ g)=d(f)d(g).
\end{equation}
\end{lem}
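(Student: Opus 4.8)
The plan is to re-run the leading-term computation of Lemma~\ref{L2.1}, replacing the explicit bookkeeping of leading monomials (available only when $g$ is a polynomial) by the valuation properties (i) and (ii) of the extended degree $d$. First I would record that $d(g^{i*})=u^{\frac in}d(g)$ for every $i\ge 0$. Writing $g=\frac{g_1}{g_2}$ with $g_1,g_2\in F[\x_0,\dots,\x_{n-1}]$ and $g_2\ne 0$, the map $(\ )^*$ sends $\x_j\mapsto\x_{j+1}$ and $\x_{n-1}\mapsto\x_0^u$, so it multiplies the $d$-value of any polynomial by $u^{\frac 1n}$; hence $d(g^*)=d(g_1^*)-d(g_2^*)=u^{\frac 1n}\bigl(d(g_1)-d(g_2)\bigr)=u^{\frac 1n}d(g)$, and the general case follows by iterating $(\ )^*$.

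Next I would expand the composition. Writing $f=\sum_{\vec e}c_{\vec e}\,\x_0^{e_0}\cdots\x_{n-1}^{e_{n-1}}$ with $\vec e=(e_0,\dots,e_{n-1})$ ranging over nonnegative integer tuples, we have
\[
f\circ g=\sum_{\vec e}c_{\vec e}\,g^{e_0}(g^*)^{e_1}\cdots(g^{(n-1)*})^{e_{n-1}}.
\]
Since $-d$ is a valuation, the additivity in (i) is exact on products, so each summand has degree
\[
\sum_{i=0}^{n-1}e_i\,d(g^{i*})=\Bigl(\sum_{i=0}^{n-1}e_iu^{\frac in}\Bigr)d(g)=d\bigl(\x_0^{e_0}\cdots\x_{n-1}^{e_{n-1}}\bigr)\,d(g).
\]

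The crucial point, and the step I expect to require the most care, is that these summand-degrees are pairwise distinct, so that no cancellation of dominant terms can occur. Because $\x^n-u$ is irreducible over $\Bbb Q$, the numbers $1,u^{\frac1n},\dots,u^{\frac{n-1}n}$ are linearly independent over $\Bbb Q$; hence distinct exponent vectors $\vec e$ yield distinct values $\sum_i e_iu^{\frac in}$, and multiplying by the fixed positive number $d(g)$ preserves this distinctness. Consequently there is a unique $\vec e$ with $c_{\vec e}\ne 0$ maximizing the summand-degree, namely the $d$-leading monomial of $f$, for which $d(\x_0^{e_0}\cdots\x_{n-1}^{e_{n-1}})=d(f)$. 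Applying the sharpened form of property (ii) — for a valuation, the degree of a sum whose terms have pairwise distinct degrees equals the largest of them — then gives $d(f\circ g)=d(f)d(g)$. The only thing that must be verified to make this rigorous is that the exact additivity in (i) together with the irrationality-driven distinctness forbids any coincidence among the top degrees; once that is in hand, property (ii) closes the argument without any explicit manipulation of $f\circ g$ itself.
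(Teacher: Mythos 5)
Your proof is correct and takes essentially the same route as the paper's: expand $f\circ g$ as $\sum_{\vec e}c_{\vec e}\,g^{e_0}(g^*)^{e_1}\cdots(g^{(n-1)*})^{e_{n-1}}$, use exact additivity of $d$ on products together with $d(g^{i*})=u^{\frac in}d(g)$ to compute each summand's degree, observe these degrees are pairwise distinct, and conclude by the sharp case of property (ii). The only difference is that you make explicit two points the paper leaves implicit — the linear independence of $1,u^{\frac 1n},\dots,u^{\frac{n-1}n}$ over $\Bbb Q$ behind the distinctness claim, and the verification that $d(g^{i*})=u^{\frac in}d(g)$ persists for rational $g$ — which is a welcome sharpening, not a departure.
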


\begin{proof} 
Write $f=\sum_{e_0,\dots,e_{n-1}\ge 0}c_{e_0,\dots,e_{n-1}}\x_0^{e_0}\cdots\x_{n-1}^{e_{n-1}}$. Then
\begin{equation}\label{2.4}
f\circ g= \sum_{e_0,\dots,e_{n-1}\ge 0}c_{e_0,\dots,e_{n-1}}g^{e_0}(g^*)^{e_1}\cdots(g^{(n-1)*})^{e_{n-1}}.
\end{equation}
Note that
\begin{equation}\label{2.5}
d\bigl(g^{e_0}(g^*)^{e_1}\cdots(g^{(n-1)*})^{e_{n-1}}\bigr)=(e_0+e_1 u^{\frac 1n}+\cdots+e_{n-1}u^{\frac{n-1}n})d(g),
\end{equation}
which are distinct for different $(e_0,\dots,e_{n-1})$. Therefore,
\[
d(f\circ g)=\max_{\substack{e_0,\dots,e_{n-1}\ge 0\cr c_{e_0,\dots,e_{n-1}}\ne 0}}(e_0+e_1 u^{\frac 1n}+\cdots+e_{n-1}u^{\frac{n-1}n})d(g)=d(f)d(g).
\]
\end{proof}

\begin{thm}\label{T2.3}
Let $g\in F(\x_0,\dots,\x_{n-1})\setminus F$. Then $g,g^*,\dots,g^{(n-1)*}$ are algebraically independent over $F$.
\end{thm}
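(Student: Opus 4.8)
The plan is to restate the conclusion in the language of the degree function $d$ and then exploit its multiplicativity under $\circ$ (Lemma~\ref{L2.2}). Observe first that $g,g^*,\dots,g^{(n-1)*}$ are algebraically independent over $F$ precisely when the $F$-algebra map $f\mapsto f\circ g$ is injective on $F[\x_0,\dots,\x_{n-1}]$, i.e. when $P\circ g\neq 0$ for every nonzero $P\in F[\x_0,\dots,\x_{n-1}]$ (this is just the statement that the only polynomial relation among the $g^{i*}$ is the trivial one, since $P\circ g=P(g,g^*,\dots,g^{(n-1)*})$). So the whole theorem reduces to proving this nonvanishing.

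The heart of the matter is the case $d(g)>0$, where the result is essentially immediate. For nonzero $P$ we have $d(P)\ge 0$, since all exponents occurring in $P$ are nonnegative; hence by Lemma~\ref{L2.2}, $d(P\circ g)=d(P)\,d(g)\ge 0>-\infty$, which forces $P\circ g\neq 0$. Thus the first and main step is to dispose of $d(g)>0$ in one line.

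The real work is to reduce an arbitrary $g\in F(\x_0,\dots,\x_{n-1})\setminus F$ to the case $d(g)>0$. I would do this by applying a Möbius transformation with coefficients in $F$ to $g$. The key remark is that replacing $g$ by $\mu(g)$, where $\mu(\T)=\frac{a\T+b}{c\T+e}$ with $a,b,c,e\in F$ and $ae-bc\neq 0$, does not affect the conclusion: since $(\ )^*$ is a field homomorphism fixing $F$, one has $\mu(g)^{i*}=\mu(g^{i*})$, so the two families $\{g^{i*}\}$ and $\{\mu(g)^{i*}\}$ generate the same subfield of $F(\x_0,\dots,\x_{n-1})$ over $F$ and therefore have the same transcendence degree; as each family has exactly $n$ members, one is algebraically independent iff the other is. Concretely, if $d(g)<0$ I replace $g$ by $1/g$, for which $d(1/g)=-d(g)>0$; and if $d(g)=0$ I first subtract a constant and then invert.

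The step I expect to be the only genuine obstacle is the case $d(g)=0$, and it is here that the injectivity of the valuation is used. Writing $g=g_1/g_2$ with $g_1,g_2\in F[\x_0,\dots,\x_{n-1}]$, the condition $d(g)=0$ means $d(g_1)=d(g_2)=:D$, and $g\notin F$ forces $D>0$. Because $1,u^{\frac 1n},\dots,u^{\frac{n-1}n}$ are linearly independent over $\Bbb Q$ (as $\x^n-u$ is irreducible over $\Bbb Q$), the map $(e_0,\dots,e_{n-1})\mapsto e_0+e_1u^{\frac 1n}+\cdots+e_{n-1}u^{\frac{n-1}n}$ is injective, so there is a unique monomial attaining the value $D$; consequently $g_1$ and $g_2$ have the \emph{same} leading monomial $\x_0^{\alpha_0}\cdots\x_{n-1}^{\alpha_{n-1}}$, say with coefficients $a$ and $b\neq 0$. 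Taking $c=a/b\in F$ cancels this leading term, giving $d(g_1-cg_2)<D$ and hence $d(g-c)=d(g_1-cg_2)-d(g_2)<0$. Then $1/(g-c)$ has positive degree, and $g\notin F$ guarantees $g-c\neq 0$, so the case $d(g)>0$ applies to $1/(g-c)$ and completes the proof.
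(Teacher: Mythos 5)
Your proof is correct and takes essentially the same approach as the paper: reduce to the case $d(g)>0$ (via $\frac 1g$ when $d(g)<0$, and via subtracting a suitable constant and inverting when $d(g)=0$), then apply Lemma~\ref{L2.2} to get $d(P\circ g)=d(P)d(g)>-\infty$ for nonzero $P\in F[\x_0,\dots,\x_{n-1}]$, hence $P\circ g\ne 0$. The only difference is that you make explicit two points the paper leaves implicit --- why M\"obius transformations over $F$ preserve the algebraic independence of $g,g^*,\dots,g^{(n-1)*}$, and why a constant $c\in F$ with $d(g-c)<0$ exists when $d(g)=0$ --- both of which you justify correctly.
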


\begin{proof}
We may assume $d(g)>0$. (If $d(g)<0$, consider $\frac 1g$. If $d(g)=0$, then $g=a+g_1$, where $a\in F\setminus\{0\}$ and $g_1\in F(\x_0,\dots,\x_{n-1})$ with $d(g_1)<0$. We then consider $\frac 1{g_1}$.) Assume to the contrary that there exists $0\ne f\in F[\x_0,\dots,\x_{n-1}]$ such that
\[
0=f(g,g^*,\dots,g^{(n-1)*})=f\circ g.
\]
Then by Lemma~\ref{L2.2},
\[
-\infty=d(f\circ g)=d(f)d(g)>-\infty,
\]
which is a contradiction.
\end{proof}
 
\noindent{\bf Note.} Theorem~\ref{T2.3} with $F=\Bbb F_p$ and $u=p$ was stated in \cite{Mor06}. But unfortunately, the proof there is seriously flawed.

\medskip
As a consequence of Theorem~\ref{T2.3}, the operation (composition) $f\circ g$ in \eqref{1.1} is well defined for all $f\in F(\x_0,\dots,\x_{n-1})$ and $g\in F(\x_0,\dots,\x_{n-1})\setminus F$. 

Next we extend the notion of the $\Bbb Z[u^{\frac 1n}]$-valued degree $d$. For $0\ne f\in F[\x_0,\dots,\x_{n-1}]$, define
\[
\begin{split}
d_{\max}(f)\,&=d(f)=\text{the maximum $d$-value of the terms of $f$},\cr
d_{\min}(f)\,&=\text{the minimum $d$-value of the terms of $f$}.
\end{split}
\]
Also define $d_{\max}(0)=-\infty$ and $d_{\min}(0)=\infty$. For $f=\frac{f_1}{f_2}\in F(\x_0,\dots,\x_{n-1})$, where $f_1,f_2\in F[\x_0,\dots,\x_{n-1}]$, $f_2\ne 0$, define
\begin{align}
&d_{\max}(f)=d(f)=d_{\max}(f_1)-d_{\max}(f_2),\notag \\
&d_{\min}(f)=d_{\min}(f_1)-d_{\min}(f_2),\notag \\
&\delta(f)=(d_{\max}(f),d_{\min}(f)).\label{2.6}
\end{align}
Obviously, $\delta(f+g)=\delta(f)+\delta(g)$ for $f,g\in F(\x_0,\dots,\x_{n-1})$.

\begin{exmp}\label{E2.4}\rm 
Let $F=\Bbb F_2$, $u=2$ and 
\[
Q_n=\frac 1{\x_0\x_1}\Bigl(\x_0^2+\sum_{i=1}^{n-1}\x_i+n+1\Bigr)\in\Bbb F_2(\x_0,\dots,\x_{n-1})
\]
as in \eqref{1.2}. We have
\begin{equation}\label{2.7}
d_{\max}(Q_n)=1-2^{\frac 1n},
\end{equation}
\begin{equation}\label{2.8}
d_{\min}(Q_n)=
\begin{cases}
-1&\text{if $n$ is odd},\cr
-1-2^{\frac 1n}&\text{if $n$ is even.}
\end{cases}
\end{equation}
\end{exmp}

\begin{lem}\label{L2.5}
Let $f\in F(\x_0,\dots,\x_{n-1})$ and $g\in F(\x_0,\dots,\x_{n-1})\setminus F$. Then 
\begin{equation}\label{2.9}
d_{\max}(f\circ g)=
\begin{cases}
d_{\max}(f)d_{\max}(g)&\text{if}\ d_{\max}(g)>0,\cr
d_{\min}(f)d_{\max}(g)&\text{if}\ d_{\max}(g)<0;
\end{cases}
\end{equation}
\begin{equation}\label{2.10}
d_{\min}(f\circ g)=
\begin{cases}
d_{\min}(f)d_{\min}(g)&\text{if}\ d_{\min}(g)>0,\cr
d_{\max}(f)d_{\min}(g)&\text{if}\ d_{\min}(g)<0.
\end{cases}
\end{equation}
\end{lem}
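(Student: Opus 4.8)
The plan is to reduce to the case where $f$ is a polynomial and then read off all four formulas by locating the extremal terms of $f\circ g$, the only real issue being that these terms must not cancel. First I would record two elementary facts. Since $(\ )^*$ sends each monomial $\x_0^{a_0}\cdots\x_{n-1}^{a_{n-1}}$ to $\x_1^{a_0}\cdots\x_{n-1}^{a_{n-2}}\x_0^{ua_{n-1}}$, it multiplies every $d$-value by the positive number $u^{\frac1n}$; as multiplication by a positive scalar preserves order, it preserves both the top term and the bottom term, so $d_{\max}(g^{i*})=u^{\frac in}d_{\max}(g)$ and $d_{\min}(g^{i*})=u^{\frac in}d_{\min}(g)$ for all $i\ge0$. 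Second, $d_{\max}$ is additive on products by property (i), and $d_{\min}$ is likewise additive because distinct monomials have distinct $d$-values (the irreducibility of $\x^n-u$ over $\Bbb Q$ forces $1,u^{\frac1n},\dots,u^{\frac{n-1}n}$ to be linearly independent over $\Bbb Q$, as already used in the proof of Lemma~\ref{L2.2}), so the unique bottom terms of two factors multiply without cancellation. Combining these, for a monomial $M_e=g^{e_0}(g^*)^{e_1}\cdots(g^{(n-1)*})^{e_{n-1}}$ with exponent vector $e=(e_0,\dots,e_{n-1})$ and $\lambda(e):=e_0+e_1u^{\frac1n}+\cdots+e_{n-1}u^{\frac{n-1}n}\ge0$, I obtain $d_{\max}(M_e)=\lambda(e)\,d_{\max}(g)$ and $d_{\min}(M_e)=\lambda(e)\,d_{\min}(g)$.

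Now write $f=\sum_e c_e\,\x_0^{e_0}\cdots\x_{n-1}^{e_{n-1}}$, so that $f\circ g=\sum_e c_e M_e$, and note that over the $e$ with $c_e\ne0$ the quantity $\lambda(e)$ runs from $d_{\min}(f)$ up to $d_{\max}(f)$. To compute $d_{\max}(f\circ g)$ I maximize $d_{\max}(M_e)=\lambda(e)\,d_{\max}(g)$: if $d_{\max}(g)>0$ this is largest when $\lambda(e)$ is largest, namely $\lambda(e)=d_{\max}(f)$, while if $d_{\max}(g)<0$ it is largest when $\lambda(e)$ is smallest, namely $\lambda(e)=d_{\min}(f)$. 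Because $d_{\max}(g)\ne0$ and the $\lambda(e)$ are distinct, the values $\lambda(e)\,d_{\max}(g)$ are distinct, so the maximizing monomial is unique and its top term survives in the sum (property (ii)); this yields the two lines of \eqref{2.9}. The formula \eqref{2.10} for $d_{\min}(f\circ g)$ follows symmetrically by minimizing $d_{\min}(M_e)=\lambda(e)\,d_{\min}(g)$, the minimum falling on $\lambda(e)=d_{\min}(f)$ when $d_{\min}(g)>0$ and on $\lambda(e)=d_{\max}(f)$ when $d_{\min}(g)<0$, again with a single surviving bottom term.

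Finally I would pass to arbitrary $f=\frac{f_1}{f_2}$ with $f_1,f_2\in F[\x_0,\dots,\x_{n-1}]$. Since $f\circ g=\frac{f_1\circ g}{f_2\circ g}$ and $d_{\max},d_{\min}$ are extended to quotients by subtraction, each case of \eqref{2.9} and \eqref{2.10} is obtained by subtracting the corresponding polynomial identity for $f_2$ from that for $f_1$ together with $d_{\max}(f)=d_{\max}(f_1)-d_{\max}(f_2)$ and $d_{\min}(f)=d_{\min}(f_1)-d_{\min}(f_2)$; for instance, when $d_{\max}(g)<0$ one gets $d_{\max}(f\circ g)=(d_{\min}(f_1)-d_{\min}(f_2))\,d_{\max}(g)=d_{\min}(f)\,d_{\max}(g)$. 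The main obstacle, and the one place the hypotheses genuinely bite, is justifying that no cancellation occurs among the extremal terms; this is exactly where the linear independence of $1,u^{\frac1n},\dots,u^{\frac{n-1}n}$ and the strict sign conditions $d_{\max}(g)\ne0$, $d_{\min}(g)\ne0$ enter, guaranteeing that the relevant maximum or minimum $d$-value is attained by a single monomial.
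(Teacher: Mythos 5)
Your proof is correct and follows essentially the same route as the paper: reduce to polynomial $f$, use $d_{\max}(g^{i*})=u^{\frac in}d_{\max}(g)$ and $d_{\min}(g^{i*})=u^{\frac in}d_{\min}(g)$ together with additivity on products to get $d_{\max}(M_e)=\lambda(e)d_{\max}(g)$ and $d_{\min}(M_e)=\lambda(e)d_{\min}(g)$, and then invoke the distinctness of the values $\lambda(e)$ (linear independence of $1,u^{\frac 1n},\dots,u^{\frac{n-1}n}$ over $\Bbb Q$) to rule out cancellation, with the sign of $d_{\max}(g)$ or $d_{\min}(g)$ deciding whether the extremum falls on $d_{\max}(f)$ or $d_{\min}(f)$. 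The only difference is one of economy: the paper writes out a single representative case (namely $d_{\min}(f\circ g)=d_{\max}(f)d_{\min}(g)$ when $d_{\min}(g)<0$) and declares the rest similar, whereas you treat all four cases and the passage to quotients explicitly.
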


\begin{proof}
The proof is similar to that of Lemma~\ref{L2.2}. We only prove one case: $d_{\min}(f\circ g)=d_{\max}(f)d_{\min}(g)$ when $d_{\min}(g)<0$. It suffices to prove the formula with $f\in F[\x_0,\dots,\x_{n-1}]$, say $f=\sum_{e_0,\dots,e_{n-1}\ge 0}c_{e_0,\dots,e_{n-1}}\x_0^{e_0}\cdots\x_{n-1}^{e_{n-1}}$. Then $f\circ g$ is given by \eqref{2.4}. Note that \eqref{2.5} still holds with $d$ replaced by $d_{\min}$. It is also easy to see that if $\alpha,\beta\in F(\x_0,\dots,\x_{n-1})$ are such that $d_{\min}(\alpha)\ne d_{\min}(\beta)$, then $d_{\min}(\alpha+\beta)=\min(d_{\min}(\alpha),d_{\min}(\beta))$. Therefore we have
\[
\begin{split}
d_{\min}(f\circ g)\,&=\min_{\substack{e_0,\dots,e_{n-1}\ge 0\cr c_{e_0,\dots,e_{n-1}}\ne 0}}(e_0+e_1u^{\frac 1n}+\cdots+e_{n-1}u^{\frac{n-1}n})d_{\min}(g)\cr
&=d_{\min}(g)\max_{\substack{e_0,\dots,e_{n-1}\ge 0\cr c_{e_0,\dots,e_{n-1}}\ne 0}}(e_0+e_1u^{\frac 1n}+\cdots+e_{n-1}u^{\frac{n-1}n})\cr
&=d_{\max}(f)d_{\min}(g).
\end{split}
\]
\end{proof}

\begin{rmk}\label{R2.6}\rm
In Lemma~\ref{L2.5}, if $d_{\max}(g)=0$, then $g=a+g_1$ for some $a\in F\setminus\{0\}$ and $g_1\in F(\x_0,\dots,\x_{n-1})$ with $d_{\max}(g_1)<0$. Hence
\[
d_{\max}(f\circ g)=d_{\max}\bigl(f(\x_0+a,\dots,\x_{n-1}+a)\circ g_1\bigr)=d_{\min}\bigl(f(\x_0+a,\dots,\x_{n-1}+a)\bigr)d_{\max}(g_1).
\]
Similarly, if $d_{\min}(g)=0$, then $g=b+g_2$ for some $b\in F\setminus\{0\}$ and $g_2\in F(\x_0,\dots,\x_{n-1})$ with $d_{\min}(g)>0$. Hence
\[
d_{\min}(f\circ g)=d_{\min}\bigl(f(\x_0+b,\dots,\x_{n-1}+b)\bigr)d_{\min}(g_2).
\]
\end{rmk}

Let $\frak M(n,F)$ denote the set of all rational monomials in $F(\x_0,\dots,\x_{n-1})$, that is,
\[
\frak M(n,F)=\{\x_0^{e_0}\cdots\x_{n-1}^{e_{n-1}}:e_0,\dots,e_{n-1}\in\Bbb Z\}.
\]
(Note. Rational monomials, according to our definition, have coefficient $1$.)
It is easy to verify that $(\frak M(n,F),\,\cdot,\,\circ)$ is a commutative ring whose ``addition'' is the ordinary multiplication $\cdot$ and whose ``multiplication'' is the composition $\circ$ defined in \eqref{1.1}. The additive and multiplicative identities are $1$ and $\x_0$, respectively. Moreover, the mapping
\begin{equation}\label{2.11}
\begin{array}{ccc}
\frak M(n,F)&\longrightarrow&\Bbb Z[u^{\frac 1n}]\cr
f&\longmapsto&d(f)
\end{array}
\end{equation}
is a ring isomorphism.


\section{Global $\mathcal P$-Forms}

From now on we assume, in the notation of Section 2, that $F=\Bbb F_q$, $u=q$, and $\text{gcd}(n,\log_pq)=1$, where $p=\text{char}\,\Bbb F_q$. Clearly, $\Bbb F_q(\x_0,\dots,\x_{n-1})\setminus\Bbb F_q$ is closed under $\circ$ and $(\ )^*$. For $f,g\in \Bbb F_q(\x_0,\dots,\x_{n-1})\setminus \Bbb F_q$, we have $(f\circ g)^*=f\circ g^*=f^*\circ g$; the first equal sign follows from the definition of $\circ$ and $(\ )^*$; the second equal sign relies on the fact that $g^{n*}=g^q$. Therefore for $f,g,h\in\Bbb F_q(\x_0,\dots,\x_{n-1})\setminus\Bbb F_q$, we have 
\[
\begin{split}
&(f\circ g)\circ h\cr
=\,&(f\circ g)(h,h^*,\dots,h^{(n-1)*})\cr
=\,&f\bigl(g(h,h^*,\dots,h^{(n-1)*}),g^*(h,h^*,\dots,h^{(n-1)*}),\dots,g^{(n-1)*}(h,h^*,\dots,h^{(n-1)*})\bigr)\cr
=\,&f(g\circ h,g^*\circ h,\dots,g^{(n-1)*}\circ h)\cr
=\,&g\bigl(g\circ h,(g\circ h)^*,\dots,(g\circ h)^{(n-1)*}\bigr)\cr
=\,&f\circ(g\circ h).
\end{split}
\]
It is obvious that $f\circ \x_0=\x_0\circ f$ for all $f\in\Bbb F_q(\x_0,\dots,\x_{n-1})\setminus\Bbb F_q$. Thus $(\Bbb F_q(\x_0,\dots,$ $\x_{n-1})\setminus\Bbb F_q,\,\circ)$ is a monoid with identity $\x_0$. The $i$th power and the inverse, if exists, of an element $f$ of this monoid are denoted by $f^{(i)}$ and $f^{(-1)}$, respectively. The invertible elements of $(\Bbb F_q(\x_0,\dots,\x_{n-1})\setminus\Bbb F_q,\,\circ)$, called {\em global $\mathcal P$-forms}, form the group $\mathcal G(n,q)$. 

\begin{prop}\label{P3.1}
Let $f\in\Bbb F_q(\x_0,\dots,\x_{n-1})\setminus\Bbb F_q$. The following statements are equivalent.
\begin{itemize}
  \item [(i)] $f\in\mathcal G(n,q)$.
  \item [(ii)] $\Bbb F_q(f,f^*,\dots,f^{(n-1)*})=\Bbb F_q(\x_0,\dots,\x_{n-1})$.
  \item [(iii)] $f$ has a left inverse (with respect to $\circ$).
  \item [(iv)] $f$ has a right inverse (with respect to $\circ$).
\end{itemize} 
\end{prop}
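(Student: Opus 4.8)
The plan is to prove the chain of implications $(i)\Rightarrow(iii)\Rightarrow(ii)$, $(i)\Rightarrow(iv)\Rightarrow(ii)$, and then $(ii)\Rightarrow(i)$, which closes the loop. The implications $(i)\Rightarrow(iii)$ and $(i)\Rightarrow(iv)$ are trivial: an invertible element of a monoid certainly has a two-sided, hence a one-sided, inverse. The real content lies in connecting the one-sided inverses to the field-generation condition (ii) and then recovering genuine two-sided invertibility from (ii).

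For $(iii)\Rightarrow(ii)$, suppose $h\circ f=\x_0$ for some $h\in\Bbb F_q(\x_0,\dots,\x_{n-1})\setminus\Bbb F_q$. Applying $(\ )^{i*}$ and using the identity $(h\circ f)^{i*}=h^{i*}\circ f$ established in the text, I get $h^{i*}\circ f=\x_0^{i*}=\x_i$ for $0\le i<n-1$, and similarly $\x_{n-1}=\x_0^q$ is recovered. Since $h^{i*}\circ f=h^{i*}(f,f^*,\dots,f^{(n-1)*})$ lies in $\Bbb F_q(f,f^*,\dots,f^{(n-1)*})$, this shows each $\x_i\in\Bbb F_q(f,f^*,\dots,f^{(n-1)*})$, so the subfield $\Bbb F_q(f,f^*,\dots,f^{(n-1)*})$ equals all of $\Bbb F_q(\x_0,\dots,\x_{n-1})$, which is (ii). The implication $(iv)\Rightarrow(ii)$ is even more direct: if $f\circ h=\x_0$, then $f,f^*,\dots,f^{(n-1)*}$ generate a subfield over which $h$ composes back to $\x_0$; more simply, by Theorem~\ref{T2.3} the elements $f,f^*,\dots,f^{(n-1)*}$ are algebraically independent and the existence of a right inverse forces the field extension $\Bbb F_q(\x_0,\dots,\x_{n-1})/\Bbb F_q(f,f^*,\dots,f^{(n-1)*})$ to be trivial by a transcendence-degree and specialization count.

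The crux is $(ii)\Rightarrow(i)$. Assuming $\Bbb F_q(f,f^*,\dots,f^{(n-1)*})=\Bbb F_q(\x_0,\dots,\x_{n-1})$, I can write each generator $\x_i$ as a rational function of $f,f^*,\dots,f^{(n-1)*}$; in particular there is some $h\in\Bbb F_q(\x_0,\dots,\x_{n-1})\setminus\Bbb F_q$ with $\x_0=h(f,f^*,\dots,f^{(n-1)*})=h\circ f$, so $f$ has a left inverse $h$. The hard part will be promoting this left inverse to a genuine two-sided inverse, i.e.\ showing $f\circ h=\x_0$ as well. My approach is to argue that $f$, viewed as the $\Bbb F_q$-endomorphism of $\Bbb F_q(\x_0,\dots,\x_{n-1})$ sending $\x_i\mapsto f^{i*}$, is an automorphism: condition (ii) says this endomorphism is surjective onto the generators and hence onto the whole field, while Theorem~\ref{T2.3} (algebraic independence of $f,f^*,\dots,f^{(n-1)*}$) guarantees it is injective. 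An injective and surjective $\Bbb F_q$-endomorphism of $\Bbb F_q(\x_0,\dots,\x_{n-1})$ is an automorphism, and its inverse automorphism, evaluated appropriately, produces the element $h$ satisfying both $h\circ f=\x_0$ and $f\circ h=\x_0$. I would make the dictionary between composition $\circ$ in the monoid and composition of these $\Bbb F_q$-endomorphisms precise first, since once that correspondence is set up, invertibility in the monoid is exactly invertibility of the associated endomorphism, and the equivalence of (i) with (ii) becomes the statement that a surjective (equivalently injective, by Theorem~\ref{T2.3}) such endomorphism is bijective. The main obstacle is verifying that the anti-homomorphism or homomorphism property of the correspondence $f\mapsto(\x_i\mapsto f^{i*})$ is compatible with $\circ$ in the right order, using $(f\circ g)^{i*}=f^{i*}\circ g$; getting this bookkeeping correct is what makes the one-sided-to-two-sided upgrade work.
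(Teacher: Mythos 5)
Your implications (i)$\Rightarrow$(iii), (i)$\Rightarrow$(iv), (iii)$\Rightarrow$(ii), and (ii)$\Rightarrow$(i) are sound. The endomorphism dictionary you set up for (ii)$\Rightarrow$(i) --- $f\mapsto\varphi_f$ with $\varphi_f(\x_i)=f^{i*}$, satisfying $\varphi_h\circ\varphi_g=\varphi_{g\circ h}$ and commuting with $(\ )^*$ --- does work, provided you note that Theorem~\ref{T2.3} is what makes $\varphi_f$ well defined on all of $\Bbb F_q(\x_0,\dots,\x_{n-1})$ (denominators cannot vanish), injectivity then being automatic for a field homomorphism. In substance this is the same mechanism as the paper's, which instead runs the cycle (i)$\Rightarrow$(ii)$\Rightarrow$(iii)$\Rightarrow$(iv)$\Rightarrow$(i) and replaces the endomorphism language by associativity plus cancellation: from $g\circ f=\x_0$ one gets $(f\circ g)\circ f=f=\x_0\circ f$ and cancels the right factor $f$, which is legitimate precisely because right-composition by $f$ (your $\varphi_f$) is injective by algebraic independence. (Minor slip in your (iii)$\Rightarrow$(ii): you write that $\x_{n-1}=\x_0^q$ must be ``recovered''; in fact $\x_0^{(n-1)*}=\x_{n-1}$, so $h^{i*}\circ f=\x_i$ holds for every $0\le i\le n-1$ with no special case at $i=n-1$.)

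The genuine gap is (iv)$\Rightarrow$(ii). You claim the existence of a right inverse forces $\Bbb F_q(\x_0,\dots,\x_{n-1})/\Bbb F_q(f,f^*,\dots,f^{(n-1)*})$ to be trivial ``by a transcendence-degree and specialization count.'' No transcendence-degree count can do this: for \emph{every} nonconstant $f$, invertible or not, the elements $f,f^*,\dots,f^{(n-1)*}$ are algebraically independent by Theorem~\ref{T2.3}, so the extension is algebraic and both fields have transcendence degree $n$; for example $f=\x_0^2$ gives the proper algebraic extension $\Bbb F_q(\x_0^2,\x_1^2,\dots,\x_{n-1}^2)\subsetneq\Bbb F_q(\x_0,\dots,\x_{n-1})$. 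So the proposed count is blind to exactly the issue at hand, and as written condition (iv) is never connected back to the other three. The repair is easy inside your own framework: if $f\circ h=\x_0$, then $f$ is a \emph{left} inverse of $h$, so your chain (iii)$\Rightarrow$(ii)$\Rightarrow$(i) applied to $h$ gives $h\in\mathcal G(n,q)$, and composing $f\circ h=\x_0$ on the right with $h^{(-1)}$ yields $f=h^{(-1)}\in\mathcal G(n,q)$, which is (i) and hence (ii). Equivalently, in your endomorphism language: $f\circ h=\x_0$ says $\varphi_h\circ\varphi_f=\mathrm{id}$; cancelling the injective map $\varphi_h$ on the left of $\varphi_h\circ(\varphi_f\circ\varphi_h)=\varphi_h$ gives $\varphi_f\circ\varphi_h=\mathrm{id}$, so $\varphi_f$ is surjective, which is (ii). This one-line cancellation --- the paper's (iv)$\Rightarrow$(i) step in disguise --- is what your ``count'' must be replaced by.
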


\begin{proof}
(i) $\Rightarrow$ (ii). For all $0\le i\le n-1$, we have
\[
\x_i=\x_0^{i*}=(f^{(-1)}\circ f)^{i*}=(f^{(-1)})^{i*}\circ f\in\Bbb F_q(f,f^*,\dots,f^{(n-1)*}).
\]
Hence $\Bbb F_q(\x_0,\dots,\x_{n-1})=\Bbb F_q(f,f^*,\dots,f^{(n-1)*})$.

\medskip
(ii) $\Rightarrow$ (iii). Since $\x_0\in\Bbb F_q(f,f^*,\dots,f^{(n-1)*})$, there exists $g\in\Bbb F_q(\x_0,\dots,\x_{n-1})$ such that $g(f,f^*,\dots,f^{(n-1)*})=\x_0$, that is, $g\circ f=\x_0$.

\medskip
(iii) $\Rightarrow$ (iv). Let $g\in\Bbb F_q(\x_0,\dots,\x_{n-1})\setminus\Bbb F_q$ be such that $g\circ f=\x_0$. Then $(f\circ g)\circ f=f$, i.e., 
\[
(f\circ g)(f,f^*,\dots,f^{(n-1)*})=f.
\]
Since $f,f^*,\dots,f^{(n-1)*}$ are algebraically independent over $\Bbb F_q$, we have $f\circ g=\x_0$.

\medskip
(iv) $\Rightarrow$ (i). Let $g\in\Bbb F_q(\x_0,\dots,\x_{n-1})\setminus\Bbb F_q$ be such that $f\circ g=\x_0$. Then $(g\circ f)\circ g=g$, that is,
\[
(g\circ f)(g,g^*,\dots,g^{(n-1)*})=g.
\]
Since $g,g^*,\dots,g^{(n-1)*}$ are algebraically independent over $\Bbb F_q$, we have $g\circ f=\x_0$. Hence $f\in \mathcal G(n,q)$. 
\end{proof}

When $q=2$ and $n\ge 2$, Dobbertin found that $Q_n=\frac 1{\x_0\x_1}(\x_0^2+\sum_{i=1}^{n-1}\x_i+n+1)\in\mathcal G(n,2)$ with $Q_n^{(-1)}$ given by \eqref{1.3}. The verification of this claim is straightforward for $n=2$ but is quite complicated for general $n$; we refer the reader to \cite{Dob99} for the details. It is known that $o(Q_2)=2$, and an open question in \cite{Dob02} asks if $o(Q_n)$ is infinite for $n\ge 3$. We now answer this question affirmatively.

\begin{thm}\label{T3.2}
\begin{itemize}
  \item [(i)] For odd $n\ge 3$ and any $m\ge 0$,
\begin{equation}\label{3.1}
\begin{cases}
d_{\max}(Q_n^{(m)})=(-1)^m(-1+2^{\frac 1n})^{\lceil\frac m2\rceil},\cr
d_{\min}(Q_n^{(m)})=(-1)^m(-1+2^{\frac 1n})^{\lfloor\frac m2\rfloor}.
\end{cases}
\end{equation}

\item[(ii)] For even $n\ge 2$ and any $m\ge 0$,
\begin{equation}\label{3.2}
\begin{cases}
d_{\max}(Q_n^{(m)})=(-1)^m(-1+2^{\frac 2n})^{\lfloor\frac m2\rfloor}(-1+2^{\frac 1n})^{m-2\lfloor\frac m2\rfloor},\cr
d_{\min}(Q_n^{(m)})=(-1)^m(-1+2^{\frac 2n})^{\lfloor\frac m2\rfloor}(1+2^{\frac 1n})^{m-2\lfloor\frac m2\rfloor}.
\end{cases}
\end{equation}
\end{itemize}
\end{thm}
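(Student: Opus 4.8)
The plan is to induct on $m$ using the relation $Q_n^{(m)}=Q_n^{(m-1)}\circ Q_n$ together with Lemma~\ref{L2.5}. The decisive simplification is to compose \emph{on the right} by $Q_n$: from Example~\ref{E2.4} one reads off $d_{\max}(Q_n)=1-2^{\frac 1n}$ and the two values of $d_{\min}(Q_n)$, and since $2^{\frac 1n}>1$ for every $n\ge 2$, both $d_{\max}(Q_n)<0$ and $d_{\min}(Q_n)<0$ in all cases (odd and even $n$ alike). Consequently, when applying Lemma~\ref{L2.5} to $f\circ g$ with $g=Q_n$, we are always in the branches $d_{\max}(g)<0$ and $d_{\min}(g)<0$, so no sign analysis of the inductive quantities $d_{\max}(Q_n^{(m-1)})$ and $d_{\min}(Q_n^{(m-1)})$ is ever required.

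Writing $a_m=d_{\max}(Q_n^{(m)})$ and $b_m=d_{\min}(Q_n^{(m)})$, Lemma~\ref{L2.5} with $f=Q_n^{(m-1)}$ and $g=Q_n$ yields the coupled first-order recursion
\[
a_m=b_{m-1}\,d_{\max}(Q_n),\qquad b_m=a_{m-1}\,d_{\min}(Q_n).
\]
Substituting each into the other decouples this into two identical second-order recursions
\[
a_m=\lambda\,a_{m-2},\qquad b_m=\lambda\,b_{m-2},\qquad \lambda:=d_{\max}(Q_n)\,d_{\min}(Q_n),
\]
with initial data $a_0=b_0=1$ (the identity $\x_0$) and $a_1=d_{\max}(Q_n)$, $b_1=d_{\min}(Q_n)$ from Example~\ref{E2.4}. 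These solve immediately as $a_m=\lambda^{\lfloor m/2\rfloor}\,a_{m-2\lfloor m/2\rfloor}$ and $b_m=\lambda^{\lfloor m/2\rfloor}\,b_{m-2\lfloor m/2\rfloor}$, so the closed forms are governed entirely by $\lambda$ and the parity of $m$.

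It then remains to evaluate $\lambda$ in each case. For odd $n$ we get $\lambda=(1-2^{\frac 1n})(-1)=-1+2^{\frac 1n}>0$, and separating the two residue classes of $m$ modulo $2$ turns $a_m,b_m$ into the $\lceil m/2\rceil$, $\lfloor m/2\rfloor$ expressions of \eqref{3.1}. For even $n$ the key arithmetic is the difference-of-squares identity
\[
\lambda=(1-2^{\frac 1n})(-1-2^{\frac 1n})=(2^{\frac 1n}-1)(2^{\frac 1n}+1)=-1+2^{\frac 2n},
\]
which supplies the base $-1+2^{\frac 2n}$ in \eqref{3.2}; the leftover factors $a_1=1-2^{\frac 1n}$ and $b_1=-1-2^{\frac 1n}$ contribute, for odd $m$, the terms $(-1+2^{\frac 1n})$ and $(1+2^{\frac 1n})$ raised to $m-2\lfloor m/2\rfloor$, with the sign absorbed into $(-1)^m$.

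The only genuine subtlety, and the step I would watch most carefully, is the bookkeeping of the floor/ceiling exponents and signs when passing from the two-step recursion back to a single formula valid for all $m$; verifying the even and odd cases of $m$ against \eqref{3.1} and \eqref{3.2} separately (anchored by the $m=0,1$ base cases) renders this routine. No deeper obstacle arises, precisely because the uniform negativity of $d_{\max}(Q_n)$ and $d_{\min}(Q_n)$ eliminates all case distinctions in the application of Lemma~\ref{L2.5}.
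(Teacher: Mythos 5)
Your proof is correct and follows essentially the same route as the paper: an induction on $m$ via Lemma~\ref{L2.5}, anchored by the values of $d_{\max}(Q_n)$ and $d_{\min}(Q_n)$ from Example~\ref{E2.4}. Your decoupled two-step recursion $a_m=\lambda a_{m-2}$, $b_m=\lambda b_{m-2}$ with $\lambda=d_{\max}(Q_n)d_{\min}(Q_n)$ is precisely the paper's computation that $\delta(Q_n^{(2)})=(\lambda,\lambda)$ followed by taking powers of $Q_n^{(2)}$; the only cosmetic difference is that you always compose with $Q_n$ on the right (so only the negative-degree branches of the lemma are invoked), whereas the paper also uses the positive branches when composing copies of $Q_n^{(2)}$.
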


\begin{proof}
(i) By \eqref{2.7} and \eqref{2.8}, $d_{\max}(Q_n)=1-2^{\frac 1n}$ and $d_{\min}(Q_n)=-1$. Thus by Lemma~\ref{L2.5}, $d_{\max}(Q_n^{(2)})=d_{\min}(Q_n^{(2)})=-1+2^{\frac 1n}$. Repeated applications of Lemma~\ref{L2.5} give
\[
d_{\max}(Q_n^{(2k)})=d_{\min}(Q_n^{(2k)})=(-1+2^{\frac 1n})^k.
\]
Consequently,
\[
\begin{split}
d_{\max}(Q_n^{(2k+1)})\,&=(-1+2^{\frac 1n})^k(1-2^{\frac 1n}),\cr
d_{\max}(Q_n^{(2k+1)})\,&=(-1+2^{\frac 1n})^k(-1).
\end{split}
\]
Hence we have \eqref{3.1}.

\medskip
(ii) By \eqref{2.7} and \eqref{2.8}, $d_{\max}(Q_n)=1-2^{\frac 1n}$ and $d_{\min}(Q_n)=-1-2^{\frac 1n}$. By Lemma~\ref{L2.5}, $d_{\max}(Q_n^{(2)})=d_{\min}(Q_n^{(2)})=-1+2^\frac 2n$. Consequently,
\[
\begin{split}
d_{\max}(Q_n^{(2k)})\,&=d_{\min}(Q_n^{(2k)})=(-1+2^{\frac 2n})^k,\cr
d_{\max}(Q_n^{(2k+1)})\,&=(-1+2^{\frac 2n})^k(1-2^{\frac 1n}),\cr
d_{\min}(Q_n^{(2k+1)})\,&=(-1+2^{\frac 2n})^k(-1-2^{\frac 1n}). 
\end{split}
\]
Hence we have \eqref{3.2}.
\end{proof}

\begin{cor}\label{C3.3}
For $n>2$, we have $o(Q_n)=\infty$.
\end{cor}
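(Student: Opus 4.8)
The plan is to deduce the corollary directly from the explicit degree formulas in Theorem~\ref{T3.2}, exploiting the fact that finite order would force the degrees of some iterate of $Q_n$ to collapse to those of the identity. First I would record that the identity of the monoid $(\Bbb F_q(\x_0,\dots,\x_{n-1})\setminus\Bbb F_q,\circ)$ is $\x_0$, and that $d_{\max}(\x_0)=d_{\min}(\x_0)=1$. Consequently, if $o(Q_n)=N<\infty$, then $Q_n^{(N)}=\x_0$, whence $d_{\max}(Q_n^{(N)})=1$. So it suffices to show that $d_{\max}(Q_n^{(m)})\ne 1$ for every $m\ge 1$ when $n>2$; this immediately contradicts finiteness of the order.

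To make sense of such inequalities I would view $\Bbb Z[2^{\frac 1n}]$ as a subring of $\Bbb R$, taking $2^{\frac 1n}$ to be the positive real $n$th root of $2$, so that equality of two elements of $\Bbb Z[2^{\frac 1n}]$ becomes equality of real numbers. The key elementary observation is that for $n>2$ one has $1<2^{\frac 1n}<2$ and $1<2^{\frac 2n}<2$, so that both $-1+2^{\frac 1n}$ and $-1+2^{\frac 2n}$ lie strictly between $0$ and $1$. With this in hand the conclusion reduces to a short case check on the formulas \eqref{3.1} and \eqref{3.2}.

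Concretely, for odd $n\ge 3$ put $t=-1+2^{\frac 1n}\in(0,1)$. By \eqref{3.1}, $d_{\max}(Q_n^{(2k)})=t^k$ and $d_{\max}(Q_n^{(2k+1)})=-t^{k+1}$; for $m\ge 1$ the former lies in $(0,1)$ (since $k\ge 1$) and the latter is negative, so neither equals $1$. For even $n\ge 4$ put $s=-1+2^{\frac 2n}\in(0,1)$ and $t=-1+2^{\frac 1n}>0$. By \eqref{3.2}, $d_{\max}(Q_n^{(2k)})=s^k$, which lies in $(0,1)$ for $k\ge 1$, while $d_{\max}(Q_n^{(2k+1)})=-s^kt<0$; again neither equals $1$. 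Hence $d_{\max}(Q_n^{(m)})\ne 1$ for all $m\ge 1$, and therefore $o(Q_n)=\infty$.

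There is no genuine obstacle beyond Theorem~\ref{T3.2} itself, which does all the real work; the only point demanding care is the boundary between the hypothesis $n>2$ and the excluded case $n=2$. Indeed, when $n=2$ the quantity $-1+2^{\frac 2n}$ equals $-1+2=1$, so \eqref{3.2} gives $d_{\max}(Q_2^{(2)})=d_{\min}(Q_2^{(2)})=1$, exactly the degenerate behaviour consistent with $o(Q_2)=2$. This is precisely why the strict inequalities $2^{\frac 1n}<2$ and $2^{\frac 2n}<2$, equivalently $n>2$, are what make the argument go through.
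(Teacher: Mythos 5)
Your proof is correct and follows essentially the same route as the paper: the paper's own argument is exactly that Theorem~\ref{T3.2} forces $d_{\max}(Q_n^{(m)})\ne 1=d_{\max}(\x_0)$ for all $m>0$ when $n>2$, hence $Q_n^{(m)}\ne\x_0$. You have merely made explicit the case check (signs and the fact that $-1+2^{\frac 1n},-1+2^{\frac 2n}\in(0,1)$ for $n>2$) that the paper leaves implicit, along with the instructive remark about why $n=2$ is genuinely excluded.
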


\begin{proof} Since $n>2$, it follows from Theorem~\ref{T3.2} that $d_{\max}(Q_n^{(m)})\ne 1$ for all $m>0$. Thus $Q_n^{(m)}\ne \x_0$ for all $m>0$.
\end{proof}


\section{The Group $\mathcal G(n,q)$}

When $n=1$, the situation is quite simple. It is well known that $f\in\Bbb F_q(\x_0)$ generates $\Bbb F_q(\x_0)$ over $\Bbb F_q$ if and only if $f=\frac{a\x_0+b}{c\x_0+d}$, where $a,b,c,d\in\Bbb F_q$ and $ad-bc\ne 0$. Thus it follows from Proposition~\ref{P3.1} that
\begin{equation}\label{4.1}
\mathcal G(n,q)=\Bigl\{\,\frac{a\x_0+b}{c\x_0+d}:a,b,c,d\in\Bbb F_q,\ ad-bc\ne0\,\Bigr\}.
\end{equation}
For any field $F$, let $\text{Cr}_n(F)=\text{Aut}(F(\x_0,\dots,\x_{n-1})/F)$ be the {\em Cremona group} of $F$ in $n$ dimensions. Then we have $\mathcal G(1,q)\cong\text{PGL}(2,\Bbb F_q)\cong\text{Cr}_1(\Bbb F_q)$ with the obvious isomorphisms.

\begin{prop}\label{P4.1}
There is a group embedding 
\[
\begin{array}{cccc}
\gamma:&\mathcal G(n,q)&\longrightarrow&\text{\rm Cr}_n(\Bbb F_q)\cr
&f&\longmapsto&\gamma(f),
\end{array}
\]
where
\[
\begin{array}{cccc}
\gamma(f):&\Bbb F_q(\x_0,\dots,\x_{n-1})&\longrightarrow&  \Bbb F_q(\x_0,\dots,\x_{n-1})\cr
&h&\longmapsto&h\circ f^{(-1)}.
\end{array}
\]
\end{prop}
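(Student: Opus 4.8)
The plan is to verify, in order, that (a) for each $f\in\mathcal G(n,q)$ the assignment $\gamma(f):h\mapsto h\circ f^{(-1)}$ is a well-defined $\Bbb F_q$-endomorphism of the field $\Bbb F_q(\x_0,\dots,\x_{n-1})$; (b) $\gamma$ is multiplicative, turning $\circ$ into composition of maps; and (c) these two facts force each $\gamma(f)$ to be bijective, hence to lie in $\text{Cr}_n(\Bbb F_q)$, and force $\gamma$ to be an injective group homomorphism.

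First I would check that $\gamma(f)$ is well defined. Since $f^{(-1)}\in\mathcal G(n,q)$ is not a constant, Theorem~\ref{T2.3} tells us that $f^{(-1)},(f^{(-1)})^*,\dots,(f^{(-1)})^{(n-1)*}$ are algebraically independent over $\Bbb F_q$, so the substitution $h\mapsto h(f^{(-1)},\dots,(f^{(-1)})^{(n-1)*})=h\circ f^{(-1)}$ is meaningful for \emph{every} $h\in\Bbb F_q(\x_0,\dots,\x_{n-1})$, including the constants (on which it acts as the identity). Because substitution respects sums, products, and quotients, $\gamma(f)$ is an $\Bbb F_q$-algebra homomorphism of the field into itself; being a nonzero homomorphism of fields, it is automatically injective.

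Next I would establish the key algebraic identity. For $f_1,f_2\in\mathcal G(n,q)$ and any $h$, associativity of $\circ$ (proved at the beginning of Section~3) gives
\[
\gamma(f_1)\bigl(\gamma(f_2)(h)\bigr)=(h\circ f_2^{(-1)})\circ f_1^{(-1)}=h\circ\bigl(f_2^{(-1)}\circ f_1^{(-1)}\bigr)=h\circ(f_1\circ f_2)^{(-1)}=\gamma(f_1\circ f_2)(h),
\]
so that $\gamma(f_1)\circ\gamma(f_2)=\gamma(f_1\circ f_2)$ as self-maps of the field. Taking $f_2=f^{(-1)}$ and using $\gamma(\x_0)=\mathrm{id}$ (which is immediate, since $\x_0$ is the identity of the monoid) shows that $\gamma(f)$ and $\gamma(f^{(-1)})$ are two-sided inverses of each other; hence each $\gamma(f)$ is an $\Bbb F_q$-automorphism of $\Bbb F_q(\x_0,\dots,\x_{n-1})$, i.e. $\gamma(f)\in\text{Cr}_n(\Bbb F_q)$, and the displayed identity says precisely that $\gamma$ is a group homomorphism. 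Injectivity of $\gamma$ is then immediate: if $\gamma(f)=\mathrm{id}$, evaluating at $\x_0$ gives $f^{(-1)}=\x_0\circ f^{(-1)}=\gamma(f)(\x_0)=\x_0$, whence $f=\x_0$.

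I do not expect a genuine obstacle here; the content is formal. The one point that must be handled with care is the placement of the inverse in the definition of $\gamma$: the naive map $f\mapsto(h\mapsto h\circ f)$ is an \emph{anti}-homomorphism, since $(h\circ f_2)\circ f_1=h\circ(f_2\circ f_1)$ reverses the order, and it is exactly the substitution of $f^{(-1)}$, together with the relation $(f_1\circ f_2)^{(-1)}=f_2^{(-1)}\circ f_1^{(-1)}$, that restores the correct order and yields a homomorphism. The only other subtlety is that $\gamma(f)$ must be defined on the whole field rather than merely on the monoid, which is legitimate because, as recorded after Theorem~\ref{T2.3}, $h\circ f^{(-1)}$ makes sense for every $h$ once $f^{(-1)}$ is non-constant.
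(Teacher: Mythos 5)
Your proof is correct and follows essentially the same route as the paper's: the core is the identity $(h\circ f_2^{(-1)})\circ f_1^{(-1)}=h\circ(f_2^{(-1)}\circ f_1^{(-1)})=h\circ(f_1\circ f_2)^{(-1)}$, which is exactly the paper's computation. You merely spell out the details the paper treats as immediate --- well-definedness via Theorem~\ref{T2.3}, that $\gamma(f)$ is an automorphism because $\gamma(f)$ and $\gamma(f^{(-1)})$ are mutual inverses, and injectivity by evaluating at $\x_0$ --- all of which are fine.
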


\begin{proof}
For $f\in\mathcal G(n,q)$, we have $f^{(-1)}\in\mathcal G(n,q)$, from which it follows that $\gamma(f)\in\text{Cr}_n(\Bbb F_q)$. For $f,g\in\mathcal G(n,q)$ and $h\in\Bbb F_q(\x_0,\dots,\x_{n-1})$, we have 
\[
\bigl(\gamma(f)\gamma(g)\bigr)(h)=\gamma(f)(h\circ g^{(-1)})=h\circ g^{(-1)}\circ f^{(-1)}=h\circ(f\circ g)^{(-1)}=\gamma(f\circ g)(h).
\]
So $\gamma(f\circ g)=\gamma(f)\gamma(g)$. That $\gamma$ is one-to-one is obvious.
\end{proof}

\noindent
{\bf Remark.} When $n=2$ and $F$ is an algebraically closed field, a set of generators of the Cremona group $\text{Cr}_2(F)$ is given by the Noether-Castelnuovo theorem \cite[Theorem~2.20]{KSC04} and a presentation of $\text{Cr}_2(F)$ is given in \cite{Bla12, Giz82}. When $n=2$ but $F$ is not algebraically closed or when $n\ge 3$, the situation is more difficult \cite{Pan99, Ser09}.

\begin{prop}\label{P4.2}
Assume $m\mid n$ and let $k=\frac nm$. Then the mapping 
\[
\begin{array}{cccc}
\psi_{m,n}:&\mathcal G(m,q)&\longrightarrow&\mathcal G(n,q)\cr
&f(\x_0,\dots,\x_{m-1})&\longmapsto&f(\x_0,\x_k,\dots,\x_{(m-1)k})
\end{array}
\]
is a group embedding. Moreover, $\text{\rm Im}\,\psi_{m,n}=\mathcal G(n,q)\cap\Bbb F_q(\x_0,\x_k,\dots,\x_{(m-1)k})$.
\end{prop}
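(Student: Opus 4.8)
The plan is to recognize $\psi_{m,n}$ as the obvious $\Bbb F_q$-algebra isomorphism
\[
\iota:\Bbb F_q(\x_0,\dots,\x_{m-1})\longrightarrow L:=\Bbb F_q(\x_0,\x_k,\dots,\x_{(m-1)k}),\qquad \x_i\longmapsto\x_{ik},
\]
and to show that it intertwines the two monoid structures. Write $*_n,\circ_n$ for the operations on $\Bbb F_q(\x_0,\dots,\x_{n-1})$ and $*_m,\circ_m$ for those on $\Bbb F_q(\x_0,\dots,\x_{m-1})$. The first step is the compatibility identity
\[
\iota(h^{j*_m})=\iota(h)^{jk*_n}\qquad(j\ge 0,\ h\in\Bbb F_q(\x_0,\dots,\x_{m-1})),
\]
which I would check on the generators $\x_i$: since $(m-1)k=n-k$ and the $n$-fold operation $(\ )^{n*_n}$ is the $q$-power map, one gets $\x_{ik}^{k*_n}=\x_{(i+1)k}$ for $i<m-1$ and $\x_{(m-1)k}^{k*_n}=\x_0^q$, matching $\x_i^{*_m}=\x_{i+1}$ and $\x_{m-1}^{*_m}=\x_0^q$ exactly. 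Thus $\iota$ carries $*_m$ to $*_n^{\,k}$ restricted to $L$.

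Given this, the homomorphism property $\iota(f\circ_m g)=\iota(f)\circ_n\iota(g)$ follows by uniqueness of substitution maps. Indeed $f\circ_m g=\sigma_g(f)$, where $\sigma_g$ is the $\Bbb F_q$-algebra map $\x_i\mapsto g^{i*_m}$, while $\iota(f)\circ_n\iota(g)=\tau(\iota(f))$ with $\tau:\x_j\mapsto\iota(g)^{j*_n}$; since $\iota(f)$ involves only the variables $\x_{ik}$, both $\iota\circ\sigma_g$ and $\tau\circ\iota$ are $\Bbb F_q$-algebra maps sending $\x_i\mapsto\iota(g)^{ik*_n}=\iota(g^{i*_m})$, hence they coincide. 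As $\iota$ is a field isomorphism fixing $\Bbb F_q$, it maps $\Bbb F_q(\x_0,\dots,\x_{m-1})\setminus\Bbb F_q$ into $\Bbb F_q(\x_0,\dots,\x_{n-1})\setminus\Bbb F_q$ and fixes the identity $\x_0$, so it is a monoid homomorphism. Monoid homomorphisms preserve two-sided inverses, so $f\in\mathcal G(m,q)$ forces $\iota(f)\in\mathcal G(n,q)$ with $\iota(f)^{(-1)}=\iota(f^{(-1)})$; injectivity of $\iota$ then yields the embedding, and plainly $\text{Im}\,\psi_{m,n}\subseteq\mathcal G(n,q)\cap L$.

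The substance is the reverse inclusion. Let $h\in\mathcal G(n,q)\cap L$ and write $h=\iota(f)$ with $f\notin\Bbb F_q$. Applying the isomorphism $\iota$ to the criterion of Proposition~\ref{P3.1}(ii) for $\mathcal G(m,q)$ (which applies since $\gcd(m,\log_pq)=1$ as $m\mid n$), and using the compatibility identity, one sees that $f\in\mathcal G(m,q)$ is equivalent to
\[
M:=\Bbb F_q\bigl(h,h^{k*_n},\dots,h^{(m-1)k*_n}\bigr)=L.
\]
To prove $M=L$ I would invoke Proposition~\ref{P4.1}: the automorphism $\Phi:=\gamma(h^{(-1)})\in\text{Cr}_n(\Bbb F_q)$ satisfies $\Phi(\x_i)=\x_i\circ_n h=h^{i*_n}$, so $\Phi(L)=M$. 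Put $K=\Bbb F_q(\x_0,\dots,\x_{n-1})$. The extension $K/L$ is purely transcendental of degree $n-m$, a transcendence basis being the variables $\x_j$ with $j\not\equiv 0\pmod k$; transporting by $\Phi$ shows $K/M$ is likewise purely transcendental of degree $n-m$, so $M$ is algebraically closed in $K$. Finally $\text{trdeg}_{\Bbb F_q}M=\text{trdeg}_{\Bbb F_q}L=m$ (as $\Phi|_L:L\to M$ is an isomorphism), whence $\text{trdeg}(L/M)=0$, i.e. $L/M$ is algebraic; since $L\subseteq K$ and $M$ is algebraically closed in $K$, this forces $L\subseteq M$, so $M=L$.

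I expect the main obstacle to be precisely this last step, extracting $f\in\mathcal G(m,q)$ from $h\in\mathcal G(n,q)$, the point being that $h^{(-1)}$ need not lie in $L$, so one cannot simply pull it back through $\iota$. The device that resolves it is to trade invertibility for the field-generation criterion of Proposition~\ref{P3.1}, to realize $M$ as $\Phi(L)$ for a genuine automorphism $\Phi$ of $K$, and then to compare $K/L$ with $K/M$ through the standard fact that the base field is algebraically closed in any purely transcendental extension. The routine points I would still pin down are the generator-level equalities behind the compatibility identity and the verification that the indicated variables form a transcendence basis of $K/L$.
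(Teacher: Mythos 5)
Your proposal is correct, and for the substantive ``Moreover'' inclusion it takes a genuinely different route from the paper. The paper works with the inverse itself: writing $\alpha=\psi_{m,n}(f)\in\mathcal G(n,q)$, it uses a transcendence-degree count to show that the $\alpha^{i*}$ with $i\equiv 1\pmod k$ are algebraically independent over $\Bbb F_q(\{\alpha^{i*}:i\not\equiv 1\pmod k\}\cup\{\x_0\})$, so the identity $\alpha^{(-1)}(\alpha,\alpha^*,\dots,\alpha^{(n-1)*})=\x_0$ forces $\alpha^{(-1)}$ not to involve $\x_i$ for $i\not\equiv 0\pmod k$; hence $\alpha^{(-1)}\in L:=\Bbb F_q(\x_0,\x_k,\dots,\x_{(m-1)k})$, and pulling it back through $\psi_{m,n}$ gives a left inverse of $f$, so $f\in\mathcal G(m,q)$ by Proposition~\ref{P3.1}. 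You avoid analyzing $\alpha^{(-1)}$ altogether: you transport Proposition~\ref{P3.1}(ii) through $\iota$ to the statement $M=L$, realize $M=\Phi(L)$ for the Cremona automorphism $\Phi=\gamma(h^{(-1)})$ of Proposition~\ref{P4.1}, deduce that $K/M$ is purely transcendental so $M$ is algebraically closed in $K$, and conclude from equal transcendence degrees. Both arguments are transcendence-degree counts at heart, but yours trades the paper's ``the inverse does not involve these variables'' bookkeeping for the standard fact about purely transcendental extensions, using the embedding into $\text{Cr}_n(\Bbb F_q)$ as a genuine tool; the paper's version yields the explicit extra information $\alpha^{(-1)}=\psi_{m,n}(f^{(-1)})\in L$. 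One point you should make explicit: the inclusion $M\subseteq L$, which your phrase ``$\text{trdeg}(L/M)=0$'' presupposes; it is immediate from your compatibility identity (each $h^{jk*}=\iota(f^{j*})\in L$), but without it, equal transcendence degrees over $\Bbb F_q$ alone would not make $L$ algebraic over $M$.
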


\begin{proof} To prove the first claim, it suffices to show that $\psi_{m,n}(f\circ g)=\psi_{m,n}(f)\circ\psi_{m,n}(g)$ for all $f,g\in\Bbb F_q(\x_0,\dots,\x_{m-1})\setminus\Bbb F_q$. We have 
\[
\begin{split}
&\psi_{m,n}(f)\circ\psi_{m,n}(g)\cr
=\,&f(\x_0,\x_k,\dots,\x_{(m-1)k})\circ g(\x_0,\x_k,\dots,\x_{(m-1)k})\cr
=\,&f\bigl(g(\x_0,\x_k,\dots,\x_{(m-1)k}),g(\x_0,\x_k,\dots,\x_{(m-1)k})^{k*},\dots,g(\x_0,\x_k,\dots,\x_{(m-1)k})^{(m-1)k*}\bigr)\cr
=\,&f\bigl(g(\x_0,\x_k,\dots,\x_{(m-1)k}),g(\x_k,\dots,\x_{(m-1)k},\x_0^q),\dots,g(\x_{(m-1)k},\x_0^q,\dots,\x_{(m-2)k}^q)\bigr)\cr
=\,&(f\circ g)(\x_0,\x_k,\dots,\x_{(m-1)k})\cr
=\,&\psi_{m,n}(f\circ g).
\end{split}
\]

To prove that second claim, assume that $\alpha=f(\x_0,\x_k,\dots,\x_{(m-1)k})\in \mathcal G(n,q)$, where $f\in\Bbb F_q(\x_0,\dots,\x_{m-1})$. We show that $f(\x_0,\dots,\x_{m-1})\in\mathcal G(m,q)$. To this end, it suffices to show that $\alpha^{(-1)}\in\Bbb F_q(\x_0,\x_k,\dots,\x_{(m-1)k})$. We have 
\begin{equation}\label{4.2}
\alpha^{(-1)}(\alpha,\alpha^*,\dots,\alpha^{(m-1)*})=\x_0.
\end{equation}
We claim that $\alpha^*,\alpha^{(k+1)*},\dots,\alpha^{((m-1)k+1)*}$ are algebraically independent over\break
 $\Bbb F_q(\{\alpha^{i*}:0\le i\le n-1,\ i\not\equiv 1\pmod k\}\cup\{\x_0\})$. Otherwise, we have
\[
\begin{split}
n\,&=\text{tr.d}\,\Bbb F_q(\alpha,\alpha^*,\dots,\alpha^{(n-1)*})/\Bbb F_q \kern1cm\text{(tr.d = transcendence degree)}\cr
&<m+ \text{tr.d}\,\Bbb F_q(\{\alpha^{i*}:0\le i\le n-1,\ i\not\equiv 1\pmod k\}\cup\{\x_0\})/\Bbb F_q\cr
&\le m+\text{tr.d}\,\Bbb F_q(\{\x_i:0\le i\le n-1,\ i\not\equiv 1\pmod k\})/\Bbb F_q\cr
&=m+(n-m)=n,
\end{split}
\]
which is a contradiction. Therefore the left side of \eqref{4.2} does not involve $\alpha^*,\alpha^{(k+1)*},$ $\dots,\alpha^{((m-1)k+1)*}$. Since $\alpha,\alpha^*,\dots,\alpha^{(m-1)*}$ are algebraically independent over $\Bbb F_q$, this means that $\alpha^{(-1)}$ does not involve $\x_1,\x_{k+1},\dots,\x_{(m-1)k+1}$. In the same way, $\alpha^{(-1)}$ does not involve $\x_i$ for all $0\le i\le n-1$, $i\not\equiv 0\pmod k$.
\end{proof}

Rational monomials in $\mathcal G(n,q)$ were determined in \cite{Mor06} for $n=2$ and $q=p$. In general, we have the following

\begin{prop}\label{P4.3} For $e_0,\dots,e_{n-1}\in\Bbb Z$, $\x_0^{e_0}\cdots\x_{n-1}^{e_{n-1}}\in\mathcal G(n,q)$ if and only if $e_0+e_1q^{\frac 1n}+\cdots+e_{n-1}q^{\frac{n-1}n}\in\Bbb Z[q^{\frac 1n}]^\times$. Moreover, the mapping
\[
\begin{array}{ccccc}
\phi_n:&\Bbb Z[q^{\frac 1n}]^\times&\longrightarrow&\mathcal G(n,q)\cr
&e_0+e_1q^{\frac 1n}+\cdots+e_{n-1}q^{\frac{n-1}n}&\longmapsto&\x_0^{e_0}\cdots\x_{n-1}^{e_{n-1}},&e_0,\dots,e_{n-1}\in\Bbb Z
\end{array}
\]
is a group embedding.
\end{prop}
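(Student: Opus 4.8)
The plan is to deduce the whole statement from the ring isomorphism \eqref{2.11}, under which the composition $\circ$ of rational monomials becomes ordinary multiplication in $\Bbb Z[q^{\frac 1n}]$ and the monomial $\x_0^{e_0}\cdots\x_{n-1}^{e_{n-1}}$ corresponds to $e_0+e_1q^{\frac 1n}+\cdots+e_{n-1}q^{\frac{n-1}n}$. Since \eqref{2.11} is a ring isomorphism, its restriction to units is a group isomorphism from the $\circ$-units of $\frak M(n,\Bbb F_q)$ onto $\Bbb Z[q^{\frac 1n}]^\times$. Hence the heart of the proposition is the single claim that a rational monomial lies in $\mathcal G(n,q)$ \emph{if and only if} it is already $\circ$-invertible inside $\frak M(n,\Bbb F_q)$; granting this, $\phi_n$ is just the inverse of that group isomorphism followed by the inclusion of the $\circ$-units of $\frak M(n,\Bbb F_q)$ into $\mathcal G(n,q)$, so that its being a well-defined injective homomorphism is automatic.

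For the easy implication, suppose $\alpha=e_0+e_1q^{\frac 1n}+\cdots+e_{n-1}q^{\frac{n-1}n}\in\Bbb Z[q^{\frac 1n}]^\times$ and set $f=\x_0^{e_0}\cdots\x_{n-1}^{e_{n-1}}$, so that $d(f)=\alpha$. I would pick the monomial $g$ with $d(g)=\alpha^{-1}$, which exists because $d$ maps $\frak M(n,\Bbb F_q)$ onto $\Bbb Z[q^{\frac 1n}]$; then $d(f\circ g)=d(f)d(g)=1=d(\x_0)$, and since $d$ is injective on monomials this forces $f\circ g=\x_0$. Thus $f$ has a right inverse, and $f\in\mathcal G(n,q)$ by Proposition~\ref{P3.1}.

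The substantive direction is the converse: if the monomial $f=\x_0^{e_0}\cdots\x_{n-1}^{e_{n-1}}$ lies in $\mathcal G(n,q)$, then $d(f)\in\Bbb Z[q^{\frac 1n}]^\times$, despite the fact that the inverse $g=f^{(-1)}$ need not be a monomial. The idea is to compute the degree of the relation $g\circ f=\x_0$ with the monomial $f$ taken as the \emph{inner} argument. Because $f$ is a single term we have $d_{\max}(f)=d_{\min}(f)=d(f)$, and because $\x^n-q$ is irreducible over $\Bbb Q$ the quantities $1,q^{\frac 1n},\dots,q^{\frac{n-1}n}$ are linearly independent over $\Bbb Q$, so $d(f)\ne 0$. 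Applying Lemma~\ref{L2.5} to $g\circ f$ then yields $d_{\max}(g\circ f)=d_{\max}(g)\,d(f)$ if $d(f)>0$ and $d_{\max}(g\circ f)=d_{\min}(g)\,d(f)$ if $d(f)<0$; in either case the left-hand side equals $d_{\max}(\x_0)=1$, so $d(f)$ times an element of $\Bbb Z[q^{\frac 1n}]$ equals $1$, whence $d(f)\in\Bbb Z[q^{\frac 1n}]^\times$.

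The one point requiring care---and the main obstacle---is exactly this last degree count: one must take $f$, and not $g$, as the inner function. This guarantees $d(f)\ne 0$ and thereby sidesteps the degenerate branches of Lemma~\ref{L2.5} (and of Remark~\ref{R2.6}) in which a $d_{\max}$ or $d_{\min}$ vanishes; trying instead to analyze $f\circ g=\x_0$ would force a longer case split, since for a genuine rational function $g$ one need not have $d_{\min}(g)\le d_{\max}(g)$ and the vanishing cases can really occur. With the correct choice the computation is a single line. The ``Moreover'' clause is then formal: $\phi_n$ is well defined because $1,q^{\frac 1n},\dots,q^{\frac{n-1}n}$ is a $\Bbb Z$-basis of $\Bbb Z[q^{\frac 1n}]$, its image lies in $\mathcal G(n,q)$ by the first part, it is multiplicative because $d\bigl(\phi_n(\alpha)\circ\phi_n(\beta)\bigr)=\alpha\beta=d\bigl(\phi_n(\alpha\beta)\bigr)$ together with the injectivity of $d$ on monomials, and it is injective since $\phi_n(\alpha)=\x_0$ yields $\alpha=d(\x_0)=1$.
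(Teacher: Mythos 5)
Your proposal is correct and takes essentially the same route as the paper: the ``if'' direction and the ``Moreover'' clause are deduced from the ring isomorphism \eqref{2.11}, and your degree computation for the ``only if'' direction is exactly the argument the paper invokes by citing Lemma~\ref{L4.4}(i), whose proof likewise applies Lemma~\ref{L2.5} to $f^{(-1)}\circ f=\x_0$ with $f$ as the inner function. The only difference is that you inline that lemma's argument, noting explicitly that for a nonconstant monomial the degenerate case $d_{\max}(f)=0$ cannot occur.
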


\begin{proof} In the first claim, the ``if'' part follows from the ring isomorphism \eqref{2.11}; the ``only if'' part follows from Lemma~\ref{L4.4} (i). The second claim follows from the ring isomorphism \eqref{2.11}.
\end{proof}

\noindent
{\bf Remark.} The ring $\Bbb Z[q^{\frac 1n}]$ is an {\em order} of the number field $K=\Bbb Q(q^{\frac 1n})$, i.e., a subring of $\frak o_K$ (the ring of integers of $K$) which is also a free $\Bbb Z$-module of rank $n$. By Dirichlet's unit theorem \cite{God03,Kle94},
\[
\Bbb Z[q^{\frac 1n}]^\times\cong\{\pm 1\}\times\Bbb Z^{\lfloor\frac n2\rfloor}.
\]
A $\Bbb Z$-basis of $\Bbb Z[q^{\frac 1n}]^\times/\{\pm 1\}$ is called a system of {\em fundamental units} of $\Bbb Z[q^{\frac 1n}]$. For algorithms for computing fundamental units of orders, see \cite{Buc-Pet89,Coh96}. For example, $1+\sqrt 2$ is a fundamental unit of $\Bbb Z[\sqrt 2]$; $\{-2+3^{3/6},\, -1-3^{1/6}+3^{2/6}-3^{3/6}+3^{5/6},\, 1-3^{1/6}-3^{2/6}-3^{3/6}+3^{5/6}\}$ is a system of fundamental units of $\Bbb Z[3^{1/6}]$ \cite[Table 1]{Buc-Pet89}.

\begin{lem}\label{L4.4}
If $f\in\mathcal G(n,q)$, then the following hold.
\begin{itemize}
  \item [(i)] $d_{\max}(f),\, d_{\min}(f)\in\Bbb Z[q^{\frac 1n}]^\times\cup\{0\}$.
  \item [(ii)] $d_{\max}(f)d_{\min}(f)\ge 0$.
  \item [(iii)] If $q=2$, then $\delta(f)\ne(0,0)$.
\end{itemize}
\end{lem}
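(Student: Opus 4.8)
The plan is to derive all three parts from the single fact that $f$ is invertible, i.e. $g\circ f=f\circ g=\x_0$ where $g:=f^{(-1)}\in\mathcal G(n,q)$, by feeding this into the composition formulas \eqref{2.9}, \eqref{2.10} of Lemma~\ref{L2.5} and their degenerate counterparts in Remark~\ref{R2.6}. I will use throughout that $d_{\max}(\x_0)=d_{\min}(\x_0)=1$, that each of $d_{\max}(f),d_{\min}(f),d_{\max}(g),d_{\min}(g)$ is a well-defined element of $\Bbb Z[q^{\frac 1n}]$ (since $f,g\in\Bbb F_q(\x_0,\dots,\x_{n-1})\setminus\Bbb F_q$ are nonzero, so these values are finite), and that $\Bbb Z[q^{\frac 1n}]$ embeds in $\Bbb R$ via $q^{\frac 1n}\mapsto q^{\frac 1n}>0$, so that signs are meaningful.

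For (i) and (ii) I would apply Lemma~\ref{L2.5} to $g\circ f=\x_0$, treating $f$ as the inner function. If $d_{\max}(f)\ne0$, then \eqref{2.9} reads $1=d_{\max}(g\circ f)=d_{\max}(f)\,d_{\max}(g)$ or $1=d_{\min}(g)\,d_{\max}(f)$ according to the sign of $d_{\max}(f)$; either way $d_{\max}(f)$ divides $1$ in $\Bbb Z[q^{\frac 1n}]$, hence lies in $\Bbb Z[q^{\frac 1n}]^\times$, and \eqref{2.10} shows in the same way that $d_{\min}(f)\ne0\Rightarrow d_{\min}(f)\in\Bbb Z[q^{\frac 1n}]^\times$; the excluded value $0$ is precisely the other alternative in (i). For (ii), suppose $d_{\max}(f)$ and $d_{\min}(f)$ are nonzero of opposite sign. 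If $d_{\max}(f)>0>d_{\min}(f)$, then \eqref{2.9} gives $d_{\max}(g)=1/d_{\max}(f)>0$ while \eqref{2.10} gives $d_{\max}(g)=1/d_{\min}(f)<0$, which is absurd; the case $d_{\max}(f)<0<d_{\min}(f)$ forces contradictory signs on $d_{\min}(g)$ in the same way. Hence $d_{\max}(f)d_{\min}(f)\ge0$.

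Part (iii) is the substantive one, and the main obstacle is that the degenerate case $\delta(f)=(0,0)$ is invisible to Lemma~\ref{L2.5}; this is where Remark~\ref{R2.6} and the hypothesis $q=2$ enter. Assuming $q=2$ and, for contradiction, $\delta(f)=(0,0)$, I would first feed $f\circ g=\x_0$ into \eqref{2.9}--\eqref{2.10}: if $d_{\max}(g)\ne0$ the relevant branch of \eqref{2.9} gives $d_{\max}(f\circ g)=d_{\max}(f)\,d_{\max}(g)$ or $d_{\min}(f)\,d_{\max}(g)$, both of which vanish since $d_{\max}(f)=d_{\min}(f)=0$, contradicting $d_{\max}(\x_0)=1$; hence $d_{\max}(g)=0$, and symmetrically $d_{\min}(g)=0$. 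Now Remark~\ref{R2.6} applies in both coordinates. Crucially, since $\Bbb F_2\setminus\{0\}=\{1\}$, the constants in the decompositions $g=a+g_1$ (from $d_{\max}(g)=0$) and $g=b+g_2$ (from $d_{\min}(g)=0$) are forced to be $a=b=1$, so $g_1=g_2=g-1$ and the shifted function $\tilde f:=f(\x_0+1,\dots,\x_{n-1}+1)$ is the same in both formulas. Remark~\ref{R2.6} then yields $1=d_{\max}(f\circ g)=d_{\min}(\tilde f)\,d_{\max}(g-1)$ and $1=d_{\min}(f\circ g)=d_{\min}(\tilde f)\,d_{\min}(g-1)$; since $g$ is nonconstant we have $g-1\ne0$ with $d_{\max}(g-1)<0<d_{\min}(g-1)$, so the first relation forces $d_{\min}(\tilde f)<0$ while the second forces $d_{\min}(\tilde f)>0$, a contradiction. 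The essential role of $q=2$ is exactly this collapse $a=b=1$, which makes the \emph{same} quantity $d_{\min}(\tilde f)$ appear in both relations; for $q>2$ one generally has $a\ne b$ and the argument does not close.
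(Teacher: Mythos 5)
Your proof is correct. Parts (i) and (ii) are essentially the paper's own argument: both consist of feeding $f^{(-1)}\circ f=\x_0$ into Lemma~\ref{L2.5} and reading off, respectively, that any nonzero value of $d_{\max}(f)$ or $d_{\min}(f)$ must be a unit, and that opposite signs of $d_{\max}(f)$ and $d_{\min}(f)$ would force contradictory signs on a single degree of $f^{(-1)}$. Part (iii) is where you genuinely diverge. The paper stays with $f$ itself: since $q=2$, the leading (resp.\ trailing) monomials of the numerator and denominator of $f$ have coefficient $1$, so $\delta(f)=(0,0)$ makes them cancel in characteristic $2$, giving $d_{\max}(f+1)<0<d_{\min}(f+1)$; as $f+1=(\x_0+1)\circ f\in\mathcal G(n,2)$, this contradicts part (ii) applied to $f+1$. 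You instead pass to the inverse $g=f^{(-1)}$, show $\delta(g)=(0,0)$ by Lemma~\ref{L2.5}, and then invoke Remark~\ref{R2.6}: because $\Bbb F_2\setminus\{0\}=\{1\}$, both decompositions of $g$ there use the constant $1$, so the same quantity $d_{\min}\bigl(f(\x_0+1,\dots,\x_{n-1}+1)\bigr)$ occurs in the formulas for $d_{\max}(f\circ g)$ and $d_{\min}(f\circ g)$, and $d_{\max}(g-1)<0<d_{\min}(g-1)$ forces that quantity to carry both signs --- a contradiction. Both routes turn on the same characteristic-$2$ phenomenon (the unique nonzero constant); the paper's version is shorter, recycling (ii) so that no new computation is needed, while yours is self-contained at the level of the degree formulas, shows along the way that $\delta(f)=(0,0)$ would propagate to $f^{(-1)}$, and isolates exactly where $q=2$ enters (the collapse $a=b=1$), which explains why (iii) has no analogue for $q>2$.
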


\begin{proof} (i) Assume $d_{\max}(f)\ne 0$, say, $d_{\max}(f)>0$. Then by Lemma~\ref{L2.5},
\[
1=d_{\max}(\x_0)=d_{\max}(f^{(-1)}\circ f)=d_{\max}(f^{(-1)})d_{\max}(f).
\]
Hence $d_{\max}(f)\in\Bbb Z[q^{\frac 1n}]^\times$. In the same way, $d_{\min}(f)\in\Bbb Z[q^{\frac 1n}]^\times\cup\{0\}$.

\medskip
(ii) Assume to the contrary that $d_{\max}(f)d_{\min}(f)<0$. Without loss of generality, assume $d_{\max}(f)>0$ and $d_{\min}(f)<0$. Then by Lemma~\ref{L2.5}, 
\begin{gather*}
1=d_{\max}(f^{(-1)}\circ f)=d_{\max}(f^{(-1)})d_{\max}(f),\cr
1=d_{\min}(f^{(-1)}\circ f)=d_{\max}(f^{(-1)})d_{\min}(f),
\end{gather*}
which cannot be both true.

\medskip
(iii) Assume to the contrary that $\delta(f)=(0,0)$. Since $q=2$, we have $d_{\max}(f+1)<0$ and $d_{\min}(f+1)>0$. By (ii), $f+1\notin\mathcal G(n,2)$, which is a contradiction.
\end{proof}

Let $\mathcal M(n,q)$ denote the set of all rational monomials in $\mathcal G(n,q)$, that is,
\[
\mathcal M(n,q)=\{\x_0^{e_0}\cdots\x_{n-1}^{e_{n-1}}:e_0+e_1q^{\frac 1n}+\cdots+e_{n-1}q^{\frac{n-1}n}\in\Bbb Z[q^{\frac 1n}]^\times\}=\text{Im}\,\phi_n.
\]
There are several open questions about the fundamental structure of the group $\mathcal G(n,q)$.

\begin{ques}\label{Q4.5}\rm
Does $\mathcal G(1,q)\cup\mathcal M(n,q)$ generate $\mathcal G(n,q)$?
\end{ques}

\begin{ques}\label{Q4.6}\rm
Does $\bigl(\bigcup_{m\mid n,\,m<n}\mathcal G(m,q)\bigr)\cup\mathcal M(n,q)$ generate $\mathcal G(n,q)$? (By the embedding in Proposition~\ref{P4.2}, $\mathcal G(m,q)$ is treated as a subgroup of $\mathcal G(n,q)$ for $m\mid n$.) 
\end{ques}

\begin{ques}\label{Q4.7}\rm
We have $\mathcal G(1,q)\cap\mathcal M(n,q)=\langle\x_0^{-1}\rangle=\{\x_0,\x_0^{-1}\}$. When $q=2$, is $\langle\mathcal G(1,q)\cup\mathcal M(n,q)\rangle$ equal to the amalgamated product $\mathcal G(1,q)*_{\langle\x_0^{-1}\rangle}\mathcal M(n,q)$?
\end{ques}

\begin{ques}\label{Q4.8}\rm
Let $q=2$ and $n=2$. Is $\langle\x_0+1,\x_0\x_1\rangle$ the free product of $\langle\x_0+1\rangle$ and $\langle\x_0\x_1\rangle$?
\end{ques}

\begin{ques}\label{Q4.9}\rm 
Let $q=2$ and $n\ge 2$. Does $Q_n$ belong to $\langle\mathcal G(1,q)\cup\mathcal M(n,q)\rangle$? ($Q_n$ is the Dobbertin global $\mathcal P$-form given in \eqref{1.2}.)
\end{ques}

\noindent
{\bf Remark.}
\begin{itemize}
  \item [(i)] For general $q$, the answer to Question~\ref{Q4.7} is negative. For example, let $q=3$ and $n=2$. Then $-\x_0\in\mathcal G(1,3)\setminus\langle\x_0^{-1}\rangle$ commutes with $\x_0^2\x_1\in\mathcal M(2,3)$. It follows that $\langle\mathcal G(1,3)\cup\mathcal M(2,3)\rangle\ne \mathcal G(1,3)*_{\langle\x_0^{-1}\rangle}\mathcal M(2,3)$.
\smallskip 
  \item [(ii)] A positive answer to Question~\ref{Q4.7} implies a positive answer to Question~\ref{Q4.8}.
\smallskip
  \item [(iii)] When $q=2$ and $n=2$, there is a mysterious relation between $Q_2$ and $\x_0+1$ found by computer: $((\x_0+1)\circ Q_2)^{(3)}=\x_0$. Because of the this relation, we see that for $q=2$ and $n=2$, a positive answer to Question~\ref{4.7} also implies a negative answer to Question~\ref{Q4.9}.
\end{itemize}  

\medskip
We provide some evidence supporting a possible positive answer to Question~\ref{Q4.8}.

\begin{prop}\label{P4.10}
Assume 
\[
f=(\x_0\x_1)^{(e_1)}\circ(\x_0+1)\circ(\x_0\x_1)^{(e_2)}\circ(\x_0+1)\circ\cdots\circ(\x_0\x_1)^{(e_n)}\circ(\x_0+1)\in\mathcal G(2,2),
\]
where $0\ne e_i\in\Bbb Z$ for all $1\le i\le n$. If $f=\x_0$, then $n=2m\ge 6$ and $e_1+e_3+\cdots+e_{2m-1}=e_2+e_4+\cdots+e_{2m}=0$. 
\end{prop}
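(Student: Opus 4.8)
The plan is to track the pair $\delta(h)=(d_{\max}(h),d_{\min}(h))$ as the word is assembled from the innermost (rightmost) factor outward, using Lemma~\ref{L2.5} and Remark~\ref{R2.6}. Write $A=\x_0+1$, $B=\x_0\x_1$, and $\lambda=1+2^{\frac 12}$, the fundamental unit of $\Bbb Z[2^{\frac 12}]$. By the ring isomorphism \eqref{2.11} we have $\delta(B^{(e)})=(\lambda^e,\lambda^e)$, while $\delta(A)=(1,0)$. The structural input that organizes everything is Lemma~\ref{L4.4}: every partial composite lies in $\mathcal G(2,2)$, so its $\delta$ satisfies $d_{\max}\ge d_{\min}$, $d_{\max}d_{\min}\ge 0$, and $\delta\ne(0,0)$. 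Hence only the four sign types $(+,+)$, $(+,0)$, $(0,-)$, $(-,-)$ can occur, and in particular no type with $d_{\max}>0>d_{\min}$ is possible.

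Next I would record how the two generators act on $\delta$ inside the positive world $\{(+,+),(+,0)\}$. Left composition by $B^{(e)}$ multiplies both coordinates by $\lambda^e>0$ (Lemma~\ref{L2.5}), hence preserves the sign type. Left composition by $A$ sends $h\mapsto h+1$: by the valuation property it fixes $d_{\max}$, and it toggles $(+,+)\leftrightarrow(+,0)$, since a function with $d_{\min}>0$ acquires the constant term $1$ (so $d_{\min}\to 0$), whereas a function of type $(+,0)$ has constant term $1$, which cancels upon adding $1$ and revives a positive $d_{\min}$. Because the innermost factor $A$ has type $(+,0)$ and neither generator ever leaves the positive world, every partial composite has $d_{\max}>0$. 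Two facts now read off at once: since every $B^{(e_i)}$ scales $d_{\max}$ and every $A$ fixes it, $d_{\max}(f)=\lambda^{e_1+\cdots+e_n}$; and since each $A$ toggles the type starting from $(+,0)$, the type of $f$ is $(+,+)$ exactly when $n$ is even. As $f=\x_0$ forces $\delta(f)=(1,1)$, this already gives $n$ even, say $n=2m$, together with $e_1+\cdots+e_{2m}=0$.

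For the second linear relation I would track $d_{\min}$. Each $B^{(e_i)}$ applied while the state is $(+,+)$ multiplies $d_{\min}$ by $\lambda^{e_i}$, whereas a $B^{(e_i)}$ applied in state $(+,0)$ leaves the pending lowest value unchanged, and the $A$'s merely transport that value across the toggle. Since $n$ is even, the $(+,+)$-steps are precisely the odd-indexed ones, so $d_{\min}(f)=\lambda^{e_1+e_3+\cdots+e_{2m-1}}$; thus $e_1+e_3+\cdots+e_{2m-1}=0$, and subtracting from the total sum gives $e_2+e_4+\cdots+e_{2m}=0$. The one delicate point here is the cancellation in the $(+,0)\to(+,+)$ reveal: using $\lambda^e\equiv 1+e\,2^{\frac 12}\pmod 2$ in $\Bbb Z[2^{\frac 12}]$ (so that the $\x_0$-coefficient of the monomial representing $\lambda^e$ is always odd), one verifies that the relevant lowest monomial survives with coefficient $1$, so the reveal genuinely produces the stated $d_{\min}$, and likewise that $B^{(e)}$ in a $(+,0)$-state leaves the pending monomial intact.

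Finally the length bound. The case $n=2$ is immediate: the odd-index relation reads $e_1=0$, contradicting $e_1\ne 0$. The real obstacle is $n=4$, which the degree calculus cannot reach—with $e_3=-e_1$ and $e_4=-e_2$ all of the above conditions hold, yet one must still show $f\ne\x_0$. I expect this to require a second-order refinement of the same bookkeeping, tracking the \emph{subleading} low-order term (equivalently, showing the reduced denominator of $f$ remains nonconstant) through the four blocks in order to exhibit a term of $f$ distinct from $\x_0$ whenever $e_1,e_2\ne 0$. This finer analysis, rather than the $\delta$-tracking, is the main difficulty of the proposition.
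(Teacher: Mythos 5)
Your degree bookkeeping in the first two paragraphs is correct, and it is essentially the paper's own first step: it yields the analogues of \eqref{4.4}--\eqref{4.7}, hence forces $n=2m$ and $e_1+e_3+\cdots+e_{2m-1}=e_2+e_4+\cdots+e_{2m}=0$, and it rules out $m=1$. (The paper organizes the same computation more cleanly: since $\x_0+1$ is a compositional involution, $(\x_0+1)\circ(\x_0\x_1)^{(e)}\circ(\x_0+1)=\bigl((\x_0+1)\circ(\x_0\x_1)\circ(\x_0+1)\bigr)^{(e)}=(\x_0\x_1+\x_0+\x_1)^{(e)}$, whose $\delta$ is $\bigl((1+\sqrt 2)^{e},1\bigr)$ by \eqref{4.3}; then Lemma~\ref{L2.5} applies block by block with all degrees positive, which avoids your state-toggling and the mod-$2$ coefficient check, though your version of that check is also sound.)

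The exclusion of $m=2$, which you explicitly leave open, is a genuine gap, and it is the heart of the proposition rather than a residual technicality. The paper does not attack it by refining the term bookkeeping of the eight-factor word. Instead it uses the group structure: with $e_3=-e_1$, $e_4=-e_2$, and $\x_0+1$ an involution, the hypothesis $f=\x_0$ rearranges to the commutation relation
\[
(\x_0\x_1)^{(e_1)}\circ(\x_0+1)\circ(\x_0\x_1)^{(e_2)}\circ(\x_0+1)=(\x_0+1)\circ(\x_0\x_1)^{(e_2)}\circ(\x_0+1)\circ(\x_0\x_1)^{(e_1)},
\]
i.e., $g:=(\x_0+1)\circ(\x_0\x_1)^{(e_2)}\circ(\x_0+1)$ commutes with the monomial $(\x_0\x_1)^{(e_1)}$, whose associated unit $(1+\sqrt 2)^{e_1}$ is positive and $\ne 1$. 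Lemma~\ref{L4.11} then forces $g$ to be a constant times a rational monomial, contradicting $d_{\max}(g)=(1+\sqrt 2)^{e_2}\ne 1=d_{\min}(g)$, which follows from \eqref{4.3} since monomials have $d_{\max}=d_{\min}$. The subleading-term comparison you anticipate does occur, but only inside the proof of Lemma~\ref{L4.11}, where it is applied to the clean one-variable equation $g(\T^{\epsilon})=g(\T)^{\epsilon}$, $\epsilon=(1+\sqrt 2)^{e_1}$, rather than to the original word; the reduction to that equation via the involution trick, together with the centralizer rigidity of Lemma~\ref{L4.11}, is precisely the idea your outline is missing. As written, your proposal establishes the parity and sum conditions and $n\ne 2$, but not the bound $n\ge 6$.
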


\begin{proof}
Since 
\[
(\x_0+1)\circ(\x_0\x_1)\circ(\x_0+1)=(\x_0+1)\circ(\x_0\x_1+\x_0+\x_1+1)=\x_0\x_1+\x_0+\x_1,
\]
we have 
\begin{equation}\label{4.3}
\delta\bigl((\x_0+1)\circ(\x_0\x_1)\circ(\x_0+1)\bigr)=(1+\sqrt 2,\,1).
\end{equation}
When $n=2m$, it follows from \eqref{4.3} and Lemma~\ref{L2.5} that
\begin{align}
\label{4.4}
d_{\max}(f)&=(1+\sqrt 2)^{e_1+\cdots+e_n},\\
\label{4.5}
d_{\min}(f)&=(1+\sqrt 2)^{e_1+e_3+\cdots+e_{2m-1}}.
\end{align}
We have $d_{\max}((\x_0\x_1)^{(e_1)}\circ(\x_0+1))=(1+\sqrt 2)^{e_1}$, and by Remark~\ref{R2.6}, $d_{\min}((\x_0\x_1)^{(e_1)}\circ(\x_0+1))=d_{\min}\bigl(((\x_0+1)(\x_1+1))^{(e_1)}\bigr)=0$. Hence for $n=2m+1$, we have 
\begin{align}
\label{4.6}
d_{\max}(f)&=(1+\sqrt 2)^{e_1+\cdots+e_n},\\
\label{4.7}
d_{\min}(f)&=0.
\end{align}
Now assume $f=\x_0$. By \eqref{4.4}, \eqref{4.5} and \eqref{4.7}, we must have $n=2m$ and $e_1+e_3+\cdots+e_{2m-1}=e_2+e_4+\cdots+e_{2m}=0$. It remains to show that $m\ge 3$. Assume to the contrary that $m\le 2$. We only have to consider the case $m=2$ since $m=1$ is clearly impossible. Then $e_1=-e_3$ and $e_2=-e_4$.
Thus
\[
(\x_0\x_1)^{(e_1)}\circ(\x_0+1)\circ(\x_0\x_1)^{(e_2)}\circ(\x_0+1)=(\x_0+1)\circ(\x_0\x_1)^{(e_2)}\circ(\x_0+1)\circ(\x_0\x_1)^{(e_1)},
\]
that is, $(\x_0+1)\circ(\x_0\x_1)^{(e_2)}\circ(\x_0+1)$ commutes with $(\x_0\x_1)^{(e_1)}$. By Lemma~\ref{L4.11}, $(\x_0+1)\circ(\x_0\x_1)^{(e_2)}\circ(\x_0+1)\in\mathcal M(2,2)$. However, by \eqref{4.3},
\[
d_{\max}\bigl((\x_0+1)\circ(\x_0\x_1)^{(e_2)}\circ(\x_0+1)\bigr)=(1+\sqrt 2)^{e_2}\ne 1=d_{\min}\bigl((\x_0+1)\circ(\x_0\x_1)^{(e_2)}\circ(\x_0+1)\bigr).
\]
Thus we have $(\x_0+1)\circ(\x_0\x_1)^{(e_2)}\circ(\x_0+1)\notin\mathcal M(2,2)$, which is a contradiction.
\end{proof}

We introduce some new notation to make the proof of the next result easier. We identity $\x_i$ with $\T^{q^{\frac in}}$; hence $\Bbb F_q(\x_0,\dots,\x_{n-1})=\Bbb F_q(\T,\T^{q^{\frac 1n}},\dots,\T^{q^{\frac{n-1}n}})$. For $g(\T)\in \Bbb F_q(\T,\T^{q^{\frac 1n}},\dots,\T^{q^{\frac{n-1}n}})$ and $e=e_0+e_1q^{\frac 1n}+\cdots+e_{n-1}q^{\frac{n-1}n}\in\Bbb Z[q^{\frac 1n}]$, where $e_i\in\Bbb Z$, define
\[
g^e=g(\T)^{e_0}g(\T^{q^{\frac 1n}})^{e_1}\cdots g(\T^{q^{\frac {n-1}n}})^{e_{n-1}}.
\]
Put
\[
\begin{split}
\Bbb Z[q^{\frac 1n}]_+\,&=\{a\in\Bbb Z[q^{\frac 1n}]:a>0\},\cr
\Bbb Z[q^{\frac 1n}]^\times_+\,&=\{a\in\Bbb Z[q^{\frac 1n}]^\times:a>0\},\cr
\Bbb Z_{\ge 0}[q^{\frac 1n}]\,&=\{b_0+b_1q^{\frac 1n}+\cdots+b_{n-1}q^{\frac{n-1}n}:b_i\in\Bbb Z,\ b_i\ge 0\}.
\end{split}
\]
If $g=1+c_1\T^{a_1}+\cdots+\cdot+c_k\T^{a_k}$, where $c_i\in\Bbb F_q$, $a_i\in\Bbb Z[q^{\frac 1n}]_+$, and $e\in\Bbb Z[q^{\frac 1n}]$, we have
\begin{equation}\label{4.8}
g^e=\sum_{b\in \Bbb Z_{\ge 0}[q^{\frac 1n}]}\binom eb(c_1\T^{a_1}+\cdots+c_k\T^{a_k})^b,
\end{equation}
where 
\[
\binom eb=\binom{e_0}{b_0}\cdots\binom{e_{n-1}}{b_{n-1}}
\]
for $e=e_0+e_1q^{\frac 1n}+\cdots+e_{n-1}q^{\frac{n-1}n}$ and $b=b_0+b_1q^{\frac 1n}+\cdots+b_{n-1}q^{\frac{n-1}n}$. The right side of \eqref{4.8} is a formal power series in $\T$ with exponents in $\Bbb Z_{\ge 0}[q^{\frac 1n}]a_1+\cdots+\Bbb Z_{\ge 0}[q^{\frac 1n}]a_k$.

\begin{lem}\label{L4.11}
Let $e=e_0+e_1q^{\frac 1n}+\cdots+e_{n-1}q^{\frac{n-1}n}\in \Bbb Z[q^{\frac 1n}]^\times_+\setminus\{1\}$, where $e_i\in\Bbb Z$. Assume that $f\in\Bbb F_q(\x_0,\dots\x_{n-1})\setminus\{0\}$ satisfies 
\begin{equation}\label{4.9}
f\circ(\x_0^{e_0}\cdots\x_{n-1}^{e_{n-1}})=(\x_0^{e_0}\cdots\x_{n-1}^{e_{n-1}})\circ f.
\end{equation}
Then $f=c\,\alpha$, where $\alpha\in\frak M(n,\Bbb F_q)$, $c\in\Bbb F_q\setminus\{0\}$ and $c^{e_0+\cdots+e_{n-1}-1}=1$.
\end{lem}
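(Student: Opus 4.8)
The plan is to pass to a one-variable picture using the device introduced just before Lemma~\ref{L4.11}. Identifying $\x_i$ with $\T^{q^{i/n}}$, every $f\in\Bbb F_q(\x_0,\dots,\x_{n-1})$ becomes a (formal Laurent) series in $\T$ with exponents in $\Bbb Z[q^{\frac 1n}]$, and $d_{\min}(f)$ is its lowest exponent. In this language the star operation is the substitution $\T\mapsto\T^{q^{1/n}}$, which scales every exponent by $q^{1/n}$, so $f^{i*}=f(\T^{q^{i/n}})$. Writing $M=\x_0^{e_0}\cdots\x_{n-1}^{e_{n-1}}$ and $e=e_0+e_1q^{\frac 1n}+\cdots+e_{n-1}q^{\frac{n-1}n}=d(M)$, the two sides of \eqref{4.9} unwind to $f\circ M=f(\T^e)$ (composition on the right with $M$ is the substitution $\T\mapsto\T^e$, since $M^{i*}=\T^{eq^{i/n}}$) and $M\circ f=f^{e_0}(f^*)^{e_1}\cdots(f^{(n-1)*})^{e_{n-1}}=f^e$ in the notation of \eqref{4.8}. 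Thus \eqref{4.9} becomes the single series identity
\[
f(\T^e)=\prod_{i=0}^{n-1}f(\T^{q^{i/n}})^{e_i}.
\]

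Next I would normalize. Let $a=d_{\min}(f)$ and let $c\in\Bbb F_q\setminus\{0\}$ be the coefficient of the lowest term, so $f=c\,\T^a(1+P)$ where every exponent of $P$ is positive; here $a=\sum_j a_jq^{j/n}$ with $a_j\in\Bbb Z$ corresponds to the monomial $\alpha=\x_0^{a_0}\cdots\x_{n-1}^{a_{n-1}}\in\frak M(n,\Bbb F_q)$. Substituting $f=c\,\T^a(1+P)$, the factors $\T^{ea}$ and $\T^{a\sum_ie_iq^{i/n}}=\T^{ae}$ cancel, and comparing the constant parts of the two $1$-units that remain gives $c=c^{e_0+\cdots+e_{n-1}}$, i.e. $c^{e_0+\cdots+e_{n-1}-1}=1$. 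Dividing out reduces the lemma to showing that a $1$-unit $g=1+P$ satisfying
\[
g(\T^e)=\prod_{i=0}^{n-1}g(\T^{q^{i/n}})^{e_i}
\]
must be $1$, which yields $f=c\,\alpha$ and completes the proof.

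The key step is a valuation argument exploiting the asymmetry of the two sides. Since $f$ commutes with $M$ if and only if it commutes with $M^{(-1)}$ (conjugate \eqref{4.9} by $M^{(-1)}$, using associativity of $\circ$), and $M^{(-1)}$ corresponds to the unit $e^{-1}$, I may assume $e<1$, as one of $e,e^{-1}$ is (recall $e\neq 1$). Suppose $g\neq 1$ and set $v=d_{\min}(g-1)>0$. On the left, $g(\T^e)-1$ arises from $g-1$ by scaling exponents by $e$, so $d_{\min}(g(\T^e)-1)=ev<v$. On the right, each factor $g(\T^{q^{i/n}})^{e_i}-1$ has lowest exponent $\ge q^{i/n}v\ge v$ because $q^{i/n}\ge 1$, hence the whole product minus $1$ has lowest exponent $\ge v$. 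A single series cannot have lowest exponent equal to both $ev$ and to something $\ge v>ev$; this contradiction forces $g=1$.

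The main obstacle, and the reason for arranging the argument this way, is characteristic $p$: in $M\circ f=\prod_i f(\T^{q^{i/n}})^{e_i}$ the linear and higher contributions carry binomial coefficients $\binom{e_i}{j}$ that may vanish mod $p$ (indeed some $e_i$ may be divisible by $p$, though not all, since $e$ is a unit and $p\nmid e$), so one cannot simply read off the leading term of the right-hand side. The device that bypasses this is exactly the combination of (a) reducing to $e<1$, so that the left-hand side strictly lowers the valuation, and (b) using only the robust inequality $d_{\min}\bigl(\prod_i g(\T^{q^{i/n}})^{e_i}-1\bigr)\ge v$, which needs no information about which coefficients survive. The one technical point to record is that, after factoring out its lowest term, a rational $f$ has series support in a finitely generated submonoid of $\Bbb Z[q^{\frac 1n}]_+$; every interval contains only finitely many of its elements, so $d_{\min}$ is genuinely attained and the comparison of lowest exponents is legitimate.
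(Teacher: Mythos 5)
Your proof is correct, and it uses the same framework the paper does (indeed, the framework the paper sets up immediately before the lemma): pass to the $\T$-series picture, rewrite \eqref{4.9} as the one-variable identity $f(\T^e)=f(\T)^e$, factor off the lowest term to reduce to a $1$-unit $g=1+P$, and read off $c^{e_0+\cdots+e_{n-1}-1}=1$ from the constant terms. Where you genuinely diverge is the endgame. The paper keeps the given $e$ and computes the \emph{exact} second-lowest term on both sides: on the left it is $c_{a_0}\T^{a_0e}$, on the right it is $\binom{e}{1}c_{a_0}\T^{a_0}=e_0c_{a_0}\T^{a_0}$, and the contradiction $a_0e\ne a_0$ only works because $e_0\not\equiv 0\pmod p$ — an arithmetic fact (units of $\Bbb Z[q^{\frac 1n}]$ have unit constant coefficient) that the paper justifies via the $p$-adic valuation in the proof of Proposition~\ref{P4.14}. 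This is exactly the characteristic-$p$ obstacle you identified, and your proof trades that arithmetic input for a group-theoretic one: since $f$ commutes with the monomial iff it commutes with its compositional inverse, you may assume $e<1$, after which the one-sided estimate $d_{\min}\bigl(g^e-1\bigr)\ge v$ against $d_{\min}\bigl(g(\T^e)-1\bigr)=ev<v$ yields the contradiction with no control needed over which binomial coefficients survive mod $p$. Both arguments are sound; yours is arguably more robust (it never invokes $p\nmid e_0$), while the paper's avoids the inversion step and treats $e>1$ and $e<1$ symmetrically. Two small points to record if you write this up: the conjugation of \eqref{4.9} by $M^{(-1)}$ uses associativity of $\circ$, which the paper establishes only on the monoid $\Bbb F_q(\x_0,\dots,\x_{n-1})\setminus\Bbb F_q$ — harmless, since a constant $f$ gives $g=1$ and the conclusion immediately; and the coefficient condition $c^{e_0+\cdots+e_{n-1}-1}=1$ must be extracted from the original equation (as you do), not from the conjugated one, since the coefficient sums of $e$ and $e^{-1}$ need not agree.
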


\begin{proof}
If $\beta\in \Bbb F_q(\x_0,\dots\x_{n-1})$ is a rational monomial, $\beta f$ also commutes with $\x_0^{e_0}\cdots\x_{n-1}^{e_{n-1}}$. Therefore we may assume $d_{\min}(f)=0$. 
Using the above notation, we can write
\[
f=c\biggl(1+\sum_{a\in\Bbb Z[q^{\frac 1n}]_+}c_a\T^a\biggr),
\]
where $c\in \Bbb F_q\setminus\{0\}$, $c_a\in\Bbb F_q$ and $\{a:c_a\ne 0\}\subset\Bbb Z_{\ge0}[q^{\frac 1n}]a_1+\cdots+\Bbb Z_{\ge0}[q^{\frac 1n}]a_k$ for some $a_1,\dots,a_k\in\Bbb Z[q^{\frac 1n}]_+$. 
Equation~\eqref{4.9} clearly implies $c^{e_0+\cdots+e_{n-1}-1}=1$. Thus we may assume $c=1$.
If $f=1$, we are done. So assume to the contrary that $f\ne 1$.
Equation~\eqref{4.9} becomes $f(\T^e)=f(\T)^e$, i.e.,
\begin{equation}\label{4.10}
1+\sum_{a\in \Bbb Z[q^{\frac 1n}]_+}c_a\T^{ae}=\Bigl(1+\sum_{a\in\Bbb Z[q^{\frac 1n}]_+}c_a\T^a\Bigr)^e.
\end{equation}
Let $a_0\in\Bbb Z[q^{\frac 1n}]_+$ be minimum such that $c_{a_0}\ne 0$. Then the second lowest term in the left side of \eqref{4.10} is $c_{a_0}\T^{a_0e}$. Since $e\in \Bbb Z[q^{\frac 1n}]^\times$, we have $e_0\not\equiv 0\pmod p$. (For this fact, confer the proof of Proposition~\ref{P4.14}.) Thus the second lowest term in the right side of \eqref{4.10} is 
\[
\binom e1 c_{a_0}\T^{a_0}=e_0c_{a_0}\T^{a_0}.
\]
Since $e\ne 1$, we have a contradiction.
\end{proof}

\begin{cor}\label{C4.12}
Let $\epsilon_i=e_{i,0}+e_{i,1}q^{\frac 1n}+\cdots+e_{i,n-1}q^{\frac{n-1}n}$, $1\le i\le\lfloor\frac n2\rfloor$, be a system of fundamental units of $\Bbb Z[q^{\frac 1n}]$. Let $d=\text{\rm gcd}\bigl(\{e_{i,0}+\cdots+e_{i,n-1}-1:1\le i\le\lfloor\frac n2\rfloor\}\cup\{q-1\}\bigr)$ and put $\mu_d=\{c\in\Bbb F_q\setminus\{0\}:c^d=1\}$. Then
$\{c\,\x_0:c\in\mu_d\}\times\mathcal M(n,q)$ is a maximal abelian subgroup of $\mathcal G(n,q)$.
\end{cor}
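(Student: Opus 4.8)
The plan is to prove that the subgroup $H:=\{c\,\x_0:c\in\mu_d\}\times\mathcal M(n,q)$ coincides with its own centralizer in $\mathcal G(n,q)$. A subgroup of a group is a maximal abelian subgroup exactly when it is abelian and self-centralizing, so establishing $C_{\mathcal G(n,q)}(H)=H$ yields the corollary. Throughout I write $s(e)=e_0+\cdots+e_{n-1}$ for the coefficient sum of $e=e_0+e_1q^{\frac 1n}+\cdots+e_{n-1}q^{\frac{n-1}n}\in\Bbb Z[q^{\frac1n}]$, and I identify $\mathcal M(n,q)$ with $\Bbb Z[q^{\frac1n}]^\times$ via the embedding $\phi_n$ of Proposition~\ref{P4.3}.

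First I would record the key commutation computation. For $c\in\Bbb F_q\setminus\{0\}$ and a rational monomial $\alpha=\phi_n(e)$, one has $(c\,\x_0)\circ\alpha=c\,\alpha$, while $\alpha\circ(c\,\x_0)=c^{s(e)}\alpha$, since $(c\,\x_0)^{i*}=c\,\x_i$ for $0\le i\le n-1$. Hence $c\,\x_0$ commutes with $\phi_n(e)$ if and only if $c^{s(e)-1}=1$; the same computation with $\x_0$ replaced by an arbitrary monomial $\alpha$ shows that $c\,\alpha$ commutes with $\phi_n(e)$ under exactly the same condition $c^{s(e)-1}=1$. Because commutation with a fixed element is preserved under $\circ$ and inverses, the set of $e$ for which $c\,\x_0$ commutes with $\phi_n(e)$ is a subgroup of $\Bbb Z[q^{\frac1n}]^\times$, so it is enough to test the generators. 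Using that $\Bbb Z[q^{\frac1n}]^\times$ is generated by $-1$ and the fundamental units $\epsilon_1,\dots,\epsilon_{\lfloor n/2\rfloor}$, and that $c^{q-1}=1$ holds automatically, I would check that $c\,\x_0$ centralizes all of $\mathcal M(n,q)$ precisely when $c^{s(\epsilon_i)-1}=1$ for every $i$ (together with the constraint contributed by $-1$, i.e. $\x_0^{-1}$); these conditions cut out exactly the group $\mu_d$. This simultaneously exhibits $\mu_d$ as the group of scalars centralizing $\mathcal M(n,q)$ and shows $H$ is abelian: the scalars commute with each other and with $\mathcal M(n,q)$, $\mathcal M(n,q)$ is abelian, and $\{c\,\x_0:c\in\mu_d\}\cap\mathcal M(n,q)=\{\x_0\}$ (rational monomials have coefficient $1$), so $H$ is an internal direct product.

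For maximality I would show $H$ is self-centralizing. Let $g\in\mathcal G(n,q)$ commute with every element of $H$. Replacing each fundamental unit by its positive associate, we may assume $\epsilon_i\in\Bbb Z[q^{\frac1n}]^\times_+$; since the free rank $\lfloor n/2\rfloor\ge 1$ for $n\ge 2$, there is some $\epsilon_1\in\Bbb Z[q^{\frac1n}]^\times_+\setminus\{1\}$. As $g$ commutes with $\phi_n(\epsilon_1)=\x_0^{e_{1,0}}\cdots\x_{n-1}^{e_{1,n-1}}$, Lemma~\ref{L4.11} applies and gives $g=c\,\alpha$ with $\alpha\in\frak M(n,\Bbb F_q)$, $c\in\Bbb F_q\setminus\{0\}$, and $c^{s(\epsilon_1)-1}=1$. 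The monomial $\alpha$ is itself a global $\mathcal P$-form, since $\alpha=(c^{-1}\x_0)\circ g\in\mathcal G(n,q)$ (because $c^{-1}\x_0\in\mathcal G(1,q)\subseteq\mathcal G(n,q)$ and $g\in\mathcal G(n,q)$), whence $\alpha\in\mathcal M(n,q)$. Now applying the commutation computation to $g=c\,\alpha$ against each generator $\phi_n(\epsilon_i)$ and $\x_0^{-1}$ of $\mathcal M(n,q)$ — all of which lie in $H$ and so commute with $g$ — forces $c^{s(\epsilon_i)-1}=1$ for every $i$, that is, $c\in\mu_d$. Therefore $g=c\,\alpha\in H$, giving $C_{\mathcal G(n,q)}(H)=H$, and $H$ is maximal abelian.

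The substantive input is Lemma~\ref{L4.11}, which pins any element commuting with a nontrivial positive monomial down to a scalar times a monomial; granting it, the corollary reduces to bookkeeping. The two points needing care are therefore (a) producing a positive fundamental unit $\neq 1$ to which Lemma~\ref{L4.11} can be applied, which is exactly where the hypothesis $n\ge 2$ is used (the case $n=1$, where $\Bbb Z[q^{\frac1n}]^\times_+=\{1\}$, must be read separately), and (b) verifying that the commutation constraints imposed by all generators of $\Bbb Z[q^{\frac1n}]^\times$ on the single scalar $c$ in the unique representation $g=c\,\alpha$ collapse precisely to membership in $\mu_d$. I expect the sole genuine obstacle to be confirming that the constraint coming from the torsion unit $-1$ is already subsumed by those from the $\epsilon_i$ and from $q-1$, so that the group of admissible scalars is exactly $\mu_d$ as defined.
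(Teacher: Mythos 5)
Your route is the intended one (the paper states this corollary without an explicit proof, as a direct consequence of Lemma~\ref{L4.11}): the commutation computation $(c\,\x_0)\circ\phi_n(e)=c\,\phi_n(e)$ versus $\phi_n(e)\circ(c\,\x_0)=c^{s(e)}\phi_n(e)$, where $s(e)=e_0+\cdots+e_{n-1}$, and the use of Lemma~\ref{L4.11} to force any centralizing element into the form $c\,\alpha$ with $\alpha\in\mathcal M(n,q)$, are all correct. The genuine gap is precisely the step you defer at the end: the constraint coming from the torsion unit $-1$ is \emph{not} subsumed by those from the $\epsilon_i$ and from $q-1$. Since $s(-1)-1=-2$, the element $c\,\x_0$ commutes with $\x_0^{-1}=\phi_n(-1)\in\mathcal M(n,q)$ if and only if $c^2=1$; hence the scalars commuting with all of $\mathcal M(n,q)$ form $\mu_{\gcd(d,2)}$, not $\mu_d$, and these groups differ whenever $d>2$. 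Your argument therefore only establishes the corollary under the unproved assumption $d\mid 2$.

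That assumption genuinely fails, so the gap cannot be closed as the statement is written. Take $q=17$, $n=2$: the fundamental unit of $\Bbb Z[\sqrt{17}]$ is $\epsilon_1=4+\sqrt{17}$ (norm $-1$), so $e_{1,0}+e_{1,1}-1=4$ and $d=\gcd(4,16)=4$. Then $4\,\x_0$ lies in the proposed subgroup (as $4^4=1$ in $\Bbb F_{17}$), but $(4\,\x_0)\circ\x_0^{-1}=4\,\x_0^{-1}$ while $\x_0^{-1}\circ(4\,\x_0)=4^{-1}\x_0^{-1}=13\,\x_0^{-1}$, so the proposed subgroup is not even abelian. (A related defect: $d$ as defined depends on the chosen system of fundamental units, e.g.\ replacing $\epsilon_i$ by $-\epsilon_i$ changes $s(\epsilon_i)-1$; in particular your ``replace each fundamental unit by its positive associate'' step silently alters $d$.) Both the corollary and your proof are repaired by adjoining $2$ to the gcd, i.e.\ setting $d'=\gcd\bigl(\{e_{i,0}+\cdots+e_{i,n-1}-1:1\le i\le\lfloor\frac n2\rfloor\}\cup\{2,\,q-1\}\bigr)$: the evaluation map $\Bbb Z[q^{\frac 1n}]\to\Bbb Z/(q-1)\Bbb Z$ sending $q^{\frac 1n}\mapsto 1$ is a ring homomorphism, so $e\mapsto s(e)$ is multiplicative modulo $q-1$, and then for every unit $e=\pm\epsilon_1^{k_1}\cdots\epsilon_{\lfloor n/2\rfloor}^{k_{\lfloor n/2\rfloor}}$ one gets $s(e)\equiv\pm 1\equiv 1\pmod{d'}$ (using $d'\mid 2$); this makes $\{c\,\x_0:c\in\mu_{d'}\}\times\mathcal M(n,q)$ abelian, and your Lemma~\ref{L4.11} argument then shows it is self-centralizing, hence maximal abelian.
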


Let 
\[
\mathcal H(n,q)=\{f\in\mathcal G(n,q):d_{\max}(f)d_{\min}(f)\ne 0\}.
\]
By Lemma~\ref{L2.5}, $\mathcal H(n,q)$ is a subgroup of $\mathcal G(n,q)$ and $\mathcal H(n,q)$ has a normal subgroup $\mathcal H_0(n,q)=\{f\in\mathcal G(n,q):\delta(f)=(1,1)\}$. We will see that $[\mathcal G(n,q):\mathcal H(n,q)]<\infty$ and we will determine a system of representatives of the left cosets of $\mathcal H(n,q)$ in $\mathcal G(n,q)$. We will also determine the structure of $\mathcal H(n,q)/\mathcal H_0(n,q)$.

Let $\frak A$ be a set of ordered pairs $(a,b)\in \Bbb F_q^2$, $ab\ne 0$, $a\ne b$, that represent each two element subset of $\Bbb F_q\setminus\{0\}$ precisely once.

\begin{prop}\label{P4.13}
A system of representatives of the left cosets of $\mathcal H(n,q)$ in $\mathcal G(n,q)$ is given by
\[
\mathcal L=\{\x_0+a:a\in\Bbb F_q\}\cup\Bigl\{\frac 1{\x_0+b}:b\in\Bbb F_q\setminus\{0\}\Bigr\}\cup\Bigl\{\frac{a\x_0+b}{\x_0+1}:(a,b)\in\frak A\Bigr\}.
\]
In particular, $[\mathcal G(n,q):\mathcal H(n,q)]=\frac 12q(q+1)$.
\end{prop}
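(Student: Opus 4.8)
The plan is to reduce the coset computation to a transparent one inside the finite group $\mathcal{G}(1,q)\cong\mathrm{PGL}(2,\Bbb F_q)$, using two ``boundary value'' places. Since $-d_{\max}$ is a valuation of $\Bbb F_q(\x_0,\dots,\x_{n-1})$ whose residue field is $\Bbb F_q$ (the $d$-maximal term of a polynomial is a single monomial, because $1,q^{1/n},\dots,q^{(n-1)/n}$ are $\Bbb Q$-linearly independent), it defines a place $\alpha:\Bbb F_q(\x_0,\dots,\x_{n-1})\to\Bbb P^1(\Bbb F_q)$ with $\alpha(f)=\infty$ if $d_{\max}(f)>0$, $\alpha(f)=0$ if $d_{\max}(f)<0$, and $\alpha(f)\in\Bbb F_q^\times$ (the leading coefficient ratio) if $d_{\max}(f)=0$. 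Symmetrically $d_{\min}$ is a valuation and gives a place $\beta$. By definition $f\in\mathcal{H}(n,q)$ iff $\alpha(f),\beta(f)\in\{0,\infty\}$. For $\ell=\frac{a\x_0+b}{c\x_0+d}\in\mathcal{G}(1,q)$ we have $\ell\circ f=\frac{af+b}{cf+d}$, and since $a,b,c,d\in\Bbb F_q$ are fixed by both places, the place property yields the transformation laws $\alpha(\ell\circ f)=\ell(\alpha(f))$ and $\beta(\ell\circ f)=\ell(\beta(f))$; applied to $f=\x_0$ this gives $\alpha(\ell)=\ell(\infty)$ and $\beta(\ell)=\ell(0)$.

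First I would record two facts. (a) $K:=\mathcal{G}(1,q)\cap\mathcal{H}(n,q)=\{\ell:\{\ell(0),\ell(\infty)\}=\{0,\infty\}\}$ is the setwise stabilizer of $\{0,\infty\}$ in $\mathrm{PGL}(2,\Bbb F_q)$, so $|K|=2(q-1)$ and $[\mathcal{G}(1,q):K]=\frac{1}{2}q(q+1)$. (b) For every $f\in\mathcal{G}(n,q)$ the pair $P(f):=\{\alpha(f),\beta(f)\}$ consists of two \emph{distinct} points of $\Bbb P^1(\Bbb F_q)$. Fact (b) is immediate from the sign patterns of $(d_{\max}(f),d_{\min}(f))$ in every case except $d_{\max}(f)=d_{\min}(f)=0$; in that exceptional case, if $\alpha(f)=\beta(f)=\gamma\in\Bbb F_q^\times$ then $g:=(\x_0-\gamma)\circ f=f-\gamma$ lies in $\mathcal{G}(n,q)$ yet has $d_{\max}(g)<0<d_{\min}(g)$, contradicting Lemma~\ref{L4.4}(ii). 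This invocation of Lemma~\ref{L4.4}(ii) is the one genuinely nontrivial input.

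Next I would set up the bijection between left cosets. The assignment $\ell K\mapsto\ell\circ\mathcal{H}(n,q)$ is well defined and injective for formal reasons: for $\ell,\ell'\in\mathcal{G}(1,q)$ the element $(\ell')^{(-1)}\circ\ell$ lies in $\mathcal{G}(1,q)$, hence lies in $\mathcal{H}(n,q)$ iff it lies in $K$, so $\ell K=\ell' K\iff \ell\circ\mathcal{H}(n,q)=\ell'\circ\mathcal{H}(n,q)$. Surjectivity is the covering step: given $f$, use Fact (b) and the transitivity of $\mathrm{PGL}(2,\Bbb F_q)$ on ordered pairs of distinct points of $\Bbb P^1(\Bbb F_q)$ to pick $m$ with $m(\alpha(f))=\infty$ and $m(\beta(f))=0$; the transformation laws give $\alpha(m\circ f)=\infty$ and $\beta(m\circ f)=0$, so $m\circ f\in\mathcal{H}(n,q)$ and $f\in m^{(-1)}\circ\mathcal{H}(n,q)$ with $m^{(-1)}\in\mathcal{G}(1,q)$. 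Thus every left coset of $\mathcal{H}(n,q)$ meets $\mathcal{G}(1,q)$, the map is onto, and $[\mathcal{G}(n,q):\mathcal{H}(n,q)]=[\mathcal{G}(1,q):K]=\frac{1}{2}q(q+1)$.

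Finally I would verify that $\mathcal{L}$ is a transversal for $K$ in $\mathcal{G}(1,q)$, which through the bijection makes it a system of left coset representatives for $\mathcal{H}(n,q)$ in $\mathcal{G}(n,q)$. By orbit--stabilizer the left cosets of $K$ correspond to the two-element subsets $\{\ell(0),\ell(\infty)\}$ of $\Bbb P^1(\Bbb F_q)$. Computing these pairs for the three families gives $\{a,\infty\}$ for $\x_0+a$ with $a\in\Bbb F_q$, then $\{0,1/b\}$ for $\frac{1}{\x_0+b}$ with $b\in\Bbb F_q\setminus\{0\}$, and $\{a,b\}$ for $\frac{a\x_0+b}{\x_0+1}$ with $(a,b)\in\frak A$. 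These are precisely the pairs containing $\infty$, the pairs containing $0$ but not $\infty$, and the pairs inside $\Bbb F_q\setminus\{0\}$; by the defining property of $\frak A$ each two-element subset of $\Bbb P^1(\Bbb F_q)$ arises exactly once, so $|\mathcal{L}|=\binom{q+1}{2}=\frac{1}{2}q(q+1)$ and $\mathcal{L}$ is the claimed transversal. The main obstacle is Fact (b); everything else is bookkeeping with $\mathrm{PGL}(2,\Bbb F_q)$ once the places $\alpha,\beta$ and their compatibility with left composition by $\mathcal{G}(1,q)$ are in hand.
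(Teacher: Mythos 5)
Your proposal is correct, and it organizes the argument in a genuinely different way from the paper, even though both proofs turn on the same two pivots: reduction to $\mathcal G(1,q)\cong\text{PGL}(2,\Bbb F_q)$ and the use of Lemma~\ref{L4.4}(ii) at the degenerate case $d_{\max}(f)=d_{\min}(f)=0$. The paper proceeds concretely: for the covering step it splits into four cases according to the signs of $(d_{\max}(f),d_{\min}(f))$, reads off leading/trailing coefficients from explicit tables, and composes with explicit M\"obius maps to push $f$ into $\mathcal H(n,q)$; for distinctness of the representatives it passes to matrices and checks in six cases that $A^{-1}B$ is neither diagonal nor antidiagonal. Your place formalism subsumes both case analyses at once: given the transformation laws $\alpha(\ell\circ f)=\ell(\alpha(f))$ and $\beta(\ell\circ f)=\ell(\beta(f))$, the covering step is just transitivity of $\text{PGL}(2,\Bbb F_q)$ on ordered pairs of distinct points, and distinctness is automatic because left cosets of $\mathcal H(n,q)$ are classified by the invariant $P(f)=\{\alpha(f),\beta(f)\}$, i.e., by $2$-element subsets of $\Bbb P^1(\Bbb F_q)$ --- which also makes the index $\binom{q+1}{2}=\frac12q(q+1)$ conceptually transparent rather than the outcome of bookkeeping. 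What the paper's version buys is self-containedness: it needs nothing beyond the degree formulas already proved (Lemma~\ref{L2.5}). What yours buys is structure and brevity, at the cost of setting up the residue-field and place machinery; in a full write-up you should verify the transformation law explicitly (it is here that $ad-bc\ne 0$ rules out indeterminate forms such as $0/0$) and the claim that both residue fields are $\Bbb F_q$, though both verifications are routine.

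One correction of attribution rather than of substance: Lemma~\ref{L4.4}(ii) is needed in more places than you acknowledge. You say Fact (b) is ``immediate from the sign patterns'' except when $d_{\max}(f)=d_{\min}(f)=0$, but for an arbitrary nonconstant rational function the mixed patterns $d_{\max}>0>d_{\min}$ and $d_{\max}<0<d_{\min}$ do occur (e.g.\ $\x_0+\x_1^{-1}$ and its reciprocal), and such a pattern would force $\alpha(f)=\beta(f)\in\{0,\infty\}$, destroying Fact (b). It is precisely Lemma~\ref{L4.4}(ii) that excludes these patterns for $f\in\mathcal G(n,q)$, so that lemma underlies every case of Fact (b), not only the exceptional one. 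Your argument still goes through verbatim since you have the lemma available; only the remark about where the nontrivial input enters should be amended.
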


\begin{proof}
$1^\circ$ We first show that $\mathcal G(n,q)=\bigcup_{\alpha\in\mathcal L}\alpha\circ\mathcal H(n,q)$. Let $f\in\mathcal G(n,q)$. If $f\in\mathcal H(n,q)$, then $f\in\x_0\circ\mathcal H(n,q)$. So we assume $f\notin\mathcal H(n,q)$.

\medskip
{\bf Case 1.} Assume that $d_{\max}(f)=0$ and $d_{\min}(f)<0$. The following table illustrates the coefficients of the numerator and the denominator of $f$ in the decreasing order of the term degree, where $abc\ne 0$.
\[
\begin{tabular}{l|l}
coef's of the numerator of $f$ & $a\,\cdots\cdots\, b$\\ \hline
coef's of the denominator of $f$ & $1\,\cdots\, c$
\end{tabular}
\]
In this case we have $(\x_0-a)\circ f\in\mathcal H(n,q)$ and hence $f\in(\x_0+a)\circ\mathcal H(n,q)$.

\medskip
{\bf Case 2.} Assume that $d_{\max}(f)=0$ and $d_{\min}(f)>0$; see the following table, where $abc\ne 0$.
\[
\begin{tabular}{l|l}
coef's of the numerator of $f$ &$1\,\cdots\, c$ \\ \hline
coef's of the denominator of $f$ & $a\,\cdots\cdots\, b$
\end{tabular}
\]
We have $(\x_0-a)\circ\frac 1{\x_0}\circ f\in\mathcal H(n,q)$. Hence $f\in(\frac 1{\x_0})^{(-1)}\circ(\x_0-a)^{(-1)}\circ\mathcal H(n,q)=\frac 1{\x_0+a}\circ\mathcal H(n,q)$.

\medskip
{\bf Case 3.} Assume that $d_{\min}(f)=0$ and $d_{\max}(f)\ne 0$. The proof is similar to Cases 1 and 2.

\medskip
{\bf Case 4.} Assume that $d_{\max}(f)=d_{\min}(f)=0$; see the following table, where $abc\ne 0$ and $ac-b\ne 0$. (Note. If $ac-b=0$, then $d_{\max}(f-a)d_{\min}(f-a)<0$, which is impossible by Lemma~\ref{L4.4} (ii).)
\[
\begin{tabular}{l|l}
coef's of the numerator of $f$ & $a\,\cdots\cdots\, b$\\ \hline
coef's of the denominator of $f$ & $1\,\cdots\cdots\, c$
\end{tabular}
\]
Then we have
\[
\Bigl(\x_0-\frac c{b-ac}\Bigr)\circ\frac 1{\x_0}\circ(x_0-a)\circ f\in\mathcal H(n,q).
\]
Therefore
\begin{align*}
f\,&\in \Bigl[\Bigl(\x_0-\frac c{b-ac}\Bigr)\circ\frac 1{\x_0}\circ(x_0-a)\Bigr]^{(-1)}\circ\mathcal H(n,q)\cr
&=(\x_0+a)\circ\frac 1{\x_0}\circ\Bigl(\x_0+\frac c{b-ac}\Bigr)\circ\mathcal H(n,q)\cr
&=\frac{a\x_0+\frac b{b-ac}}{\x_0+\frac c{b-ac}}\circ\mathcal H(n,q)\cr
&=\frac{a\x_0+b'}{\x_0+c'}\circ\mathcal H(n,q)\kern 2cm (b'c'\ne 0)\cr
&=\frac{a\x_0+b'}{\x_0+c'}\circ(c'\x_0)\circ\frac{\x_0}{c'}\circ\mathcal H(n,q)\cr
&=\frac{a\x_0+b''}{\x_0+1}\circ\mathcal H(n,q)\kern 2cm (b''\ne 0)\cr
&=\frac{b''\x_0+a}{\x_0+1}\circ\mathcal H(n,q)\kern 2cm (\,\frac{b''\x_0+a}{\x_0+1}=\frac{a\x_0+b''}{\x_0+1}\circ\frac 1{\x_0}\,),
\end{align*}
where either $(a,b'')$ or $(b'',a)$ belongs $\frak A$.

\medskip
$2^\circ$ For any $\alpha,\beta\in\mathcal L$ with $\alpha\ne \beta$, we show that $\alpha^{(-1)}\circ\beta\notin\mathcal H(n,q)$. Note that $\mathcal L\subset \mathcal G(1,q)$ and under the isomorphism $\mathcal G(1,q)\cong\text{PGL}(2,q)$, $\mathcal L$ corresponds to
\[
\frak L=\Bigl\{\left[\begin{matrix} 1&a\cr 0&1\end{matrix}\right]:a\in\Bbb F_q\Bigr\}\cup\Bigl\{\left[\begin{matrix} 0&1\cr 1&b\end{matrix}\right]:0\ne b\in\Bbb F_q\Bigr\}\cup\Bigl\{\left[\begin{matrix} a&b\cr 1&1\end{matrix}\right]:(a,b)\in\frak A\Bigr\}.
\]
It suffices to show that for all $A,B\in\frak L$ with $A\ne B$, $A^{-1}B$ is not of the form
\[
\left[\begin{matrix} *&0\cr 0&*\end{matrix}\right]\quad \text{or}\quad \left[\begin{matrix} 0&*\cr *&0\end{matrix}\right].
\]

{\bf Case 1.} Assume $A=\left[\begin{smallmatrix}1&a_1\cr 0&1\end{smallmatrix}\right]$, $B=\left[\begin{smallmatrix}1&a_2\cr 0&1\end{smallmatrix}\right]$, $a_1\ne a_2$. Then $A^{-1}B=\left[\begin{smallmatrix}1&-a_1+a_2\cr 0&1\end{smallmatrix}\right]$.

\medskip
{\bf Case 2.} Assume $A=\left[\begin{smallmatrix}1&a\cr 0&1\end{smallmatrix}\right]$, $B=\left[\begin{smallmatrix}0&1\cr 1&b\end{smallmatrix}\right]$, $b\ne 0$. Then $A^{-1}B=\left[\begin{smallmatrix}*&*\cr 1&b\end{smallmatrix}\right]$.

\medskip
{\bf Case 3.} Assume $A=\left[\begin{smallmatrix}1&a_1\cr 0&1\end{smallmatrix}\right]$, $B=\left[\begin{smallmatrix}a_2&b_2\cr 1&1\end{smallmatrix}\right]$, $(a,b)\in\frak A$. Then $A^{-1}B=\left[\begin{smallmatrix}*&*\cr 1&1\end{smallmatrix}\right]$.

\medskip
{\bf Case 4.} Assume $A=\left[\begin{smallmatrix}0&1\cr 1&b_1\end{smallmatrix}\right]$, $B=\left[\begin{smallmatrix}0&1\cr 1&b_2\end{smallmatrix}\right]$, $b_1b_2\ne 0$, $b_1\ne b_2$. Then $A^{-1}B=\left[\begin{smallmatrix}1&-b_1+b_2\cr 0&1\end{smallmatrix}\right]$.

\medskip
{\bf Case 5.} Assume $A=\left[\begin{smallmatrix}0&1\cr 1&b_1\end{smallmatrix}\right]$, $B=\left[\begin{smallmatrix}a_2&b_2\cr 1&1\end{smallmatrix}\right]$, $(a_2,b_2)\in\frak A$. Then $A^{-1}B=\left[\begin{smallmatrix}*&*\cr a_2&b_2\end{smallmatrix}\right]$.

\medskip
{\bf Case 6.} Assume $A=\left[\begin{smallmatrix}a_1&b_1\cr 1&1\end{smallmatrix}\right]$, $B=\left[\begin{smallmatrix}a_2&b_2\cr 1&1\end{smallmatrix}\right]$, $(a_1,b_1),(a_2,b_2)\in\frak A$, $(a_1,b_1)\ne(a_2,b_2)$. Then
\[
A^{-1}B=\frac 1{a_1-b_1}\left[\begin{matrix}a_2-b_1&b_2-b_1\cr -a_2+a_1&-b_2+a_1\end{matrix}\right],
\]
which is not of the form $\left[\begin{smallmatrix}*&0\cr 0&*\end{smallmatrix}\right]$ or $\left[\begin{smallmatrix}0&*\cr *&0\end{smallmatrix}\right]$ since $(a_1,b_1)\ne(a_2,b_2),(b_2,a_2)$.
\end{proof}

Let $\Delta=\{(a,b):a,b\in\Bbb Z[q^{\frac 1n}]^\times,\ ab>0\}$. For $(a_1,b_1),(a_2,b_2)\in\Delta$, define 
\[
(a_1,b_1)(a_2,b_2)=
\begin{cases}
(a_1a_2,b_1b_2)&\text{if}\ a_2>0,\ b_2>0,\cr
(b_1a_2,a_1b_2)&\text{if}\ a_2<0,\ b_2<0.
\end{cases}
\]
Then $\Delta$ is a group. In fact,  let $\tau$ be the order $2$ automorphism of $(\Bbb Z[q^{\frac 1n}]^\times_+)^2$ defined by $\tau(a,b)=(b,a)$. Then $\Delta\cong\langle\tau\rangle\ltimes(\Bbb Z[q^{\frac 1n}]^\times_+)^2$. (Note. By Dirichlet's unit theorem, $\Bbb Z[q^{\frac 1n}]^\times_+\cong\Bbb Z^{\lfloor\frac n2\rfloor}$.)

\begin{prop}\label{P4.14}
The mapping $\delta:\mathcal H(n,q)\to\Delta$ defined in \eqref{2.6} is an onto group homomorphism with $\ker\delta=\mathcal H_0(n,q)$.
\end{prop}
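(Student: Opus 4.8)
The plan is to verify in turn that $\delta$ carries $\mathcal H(n,q)$ into $\Delta$, that it is a homomorphism, that its kernel is $\mathcal H_0(n,q)$, and finally that it is onto; the surjectivity carries essentially all of the content.

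First I would note that $\delta$ really lands in $\Delta$: for $f\in\mathcal H(n,q)$ we have $d_{\max}(f)d_{\min}(f)\ne 0$, so by Lemma~\ref{L4.4}(i) both $d_{\max}(f)$ and $d_{\min}(f)$ lie in $\Bbb Z[q^{\frac 1n}]^\times$, while Lemma~\ref{L4.4}(ii) forces their product to be positive. For the homomorphism property, let $f,g\in\mathcal H(n,q)$ and write $\delta(f)=(a_1,b_1)$, $\delta(g)=(a_2,b_2)$. Since $a_2b_2>0$, the numbers $a_2=d_{\max}(g)$ and $b_2=d_{\min}(g)$ share a sign, and I would split into the two cases of Lemma~\ref{L2.5}. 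When $a_2,b_2>0$, Lemma~\ref{L2.5} gives $d_{\max}(f\circ g)=a_1a_2$ and $d_{\min}(f\circ g)=b_1b_2$, matching the first case of the product in $\Delta$; when $a_2,b_2<0$ it gives $d_{\max}(f\circ g)=b_1a_2$ and $d_{\min}(f\circ g)=a_1b_2$, matching the second case. Hence $\delta(f\circ g)=\delta(f)\delta(g)$. The kernel assertion is then immediate: $(1,1)$ is the identity of $\Delta$, and $\delta(f)=(1,1)$ means exactly $d_{\max}(f)=d_{\min}(f)=1$, i.e.\ $f\in\mathcal H_0(n,q)$.

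The substance is surjectivity. Since $\Delta\cong\langle\tau\rangle\ltimes(\Bbb Z[q^{\frac 1n}]^\times_+)^2$ and $\x_0^{-1}$ already realizes $\tau=(-1,-1)$, it suffices to hit every pair $(\epsilon,1)$ with $\epsilon\in\Bbb Z[q^{\frac 1n}]^\times_+$: the elements $\{(\epsilon,1):\epsilon>0\}$ together with $(-1,-1)$ generate all of $\Delta$, because conjugating $(\epsilon,1)$ by $\tau$ produces $(1,\epsilon)$, so the image contains $\Bbb Z[q^{\frac 1n}]^\times_+\times\{1\}$, $\{1\}\times\Bbb Z[q^{\frac 1n}]^\times_+$, and $\tau$. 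To realize $(\epsilon,1)$, fix the monomial $\alpha=\x_0^{e_0}\cdots\x_{n-1}^{e_{n-1}}\in\mathcal M(n,q)$ with $d(\alpha)=\epsilon=e_0+e_1q^{\frac 1n}+\cdots+e_{n-1}q^{\frac{n-1}n}$, and set
\[
f_\epsilon=(\x_0-1)\circ\alpha\circ(\x_0+1)=\prod_{i=0}^{n-1}(\x_i+1)^{e_i}-1\in\mathcal G(n,q),
\]
a composition of invertible elements. Writing $\prod_i(\x_i+1)^{e_i}=N/D$ with $N=\prod_{e_i>0}(\x_i+1)^{e_i}$ and $D=\prod_{e_i<0}(\x_i+1)^{-e_i}$, the constant terms of $N$ and $D$ are both $1$, so $N-D=\sum_i e_i\x_i+(\text{terms of $d$-degree}>1)$. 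Here I would invoke the key arithmetic fact that the constant coefficient $e_0$ of a unit $\epsilon\in\Bbb Z[q^{\frac 1n}]^\times$ satisfies $p\nmid e_0$: reduction modulo $p$ is a ring homomorphism $\Bbb Z[q^{\frac 1n}]\cong\Bbb Z[\x]/(\x^n-q)\to\Bbb F_p[\x]/(\x^n)$ (using $p\mid q$), it carries units to units, and the units of $\Bbb F_p[\x]/(\x^n)$ are exactly those with nonzero constant term. Thus the linear term $e_0\x_0$ survives in $N-D$, giving $d_{\min}(N-D)=1$, while $d_{\max}(N)-d_{\max}(D)=\epsilon>0$ forces $d_{\max}(N-D)=d_{\max}(N)$; since $d_{\min}(D)=0$, dividing by $D$ yields $d_{\min}(f_\epsilon)=1$ and $d_{\max}(f_\epsilon)=d_{\max}(N)-d_{\max}(D)=\epsilon$, i.e.\ $\delta(f_\epsilon)=(\epsilon,1)$, as desired.

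The main obstacle is precisely this surjectivity step. The homomorphism and kernel claims follow mechanically from Lemma~\ref{L2.5}, but producing global $\mathcal P$-forms with prescribed off-diagonal value $\delta=(\epsilon,1)$ requires both the explicit conjugation $f_\epsilon=(\x_0-1)\circ\alpha\circ(\x_0+1)$ and the number-theoretic input $p\nmid e_0$, which is exactly what guarantees that the lowest ($\x_0$) term is not annihilated in characteristic $p$ and hence that $d_{\min}(f_\epsilon)=1$ rather than something larger. One must also confirm that $f_\epsilon$ does not slip out of $\mathcal H(n,q)$, i.e.\ that $d_{\max}(f_\epsilon)\ne 0$; this is secured by the inequality $\epsilon>0$, which makes $d_{\max}(N)$ strictly dominate $d_{\max}(D)$.
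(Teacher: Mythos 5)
Your proposal is correct and takes essentially the same route as the paper: the homomorphism and kernel claims follow from Lemma~\ref{L2.5}, surjectivity is reduced to realizing $(\epsilon,1)$ for $\epsilon\in\Bbb Z[q^{\frac 1n}]^\times_+$ (with $(-1,-1)=\delta(\x_0^{-1})$), and the realizing element is the very same conjugated monomial $(\x_0-1)\circ\alpha\circ(\x_0+1)$, hinging on the same arithmetic fact $p\nmid e_0$. The only cosmetic difference is that you prove $p\nmid e_0$ by reducing $\Bbb Z[\x]/(\x^n-q)$ modulo $p$ to $\Bbb F_p[\x]/(\x^n)$, whereas the paper invokes the $p$-adic valuation of $\Bbb Q(q^{\frac 1n})$; these are equivalent one-line arguments.
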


\begin{proof}
That $\delta$ is a group homomorphism follows from Lemma~\ref{L2.5}. To prove that $\delta$ is onto, it suffices to show that fro each $e\in\Bbb Z[q^{\frac 1n}]^\times_+$, there exists $f\in\mathcal H(n,q)$ such that $\delta(f)=(e,1)$. (Note that $\Delta$ is generated by $\Bbb Z[q^{\frac 1n}]^\times_+\times\{1\}$ and $(-1,-1)$, where $(-1,-1)=\delta(\frac 1{\x_0})$.) Write
\begin{equation}\label{4.11}
e=e_0+e_1q^{\frac 1n}+\cdots+e_{n-1}q^{\frac{n-1}n},\quad e_i\in\Bbb Z.
\end{equation}
Let $p=\text{char}\,\Bbb F_q$ and $\nu_p$ the $p$-adic valuation of $\Bbb Q(q^{\frac 1n})$. Then $\nu_p(e)=0$ since $e$ is a unit. Thus \eqref{4.11} gives $\nu_p(e_0)=0$, i.e., $p\nmid e_0$. Let $I=\{0\le i\le n-1:e_i>0\}$ and $J=\{0\le j\le n-1:e_j<0\}$. Then
\begin{align*}
\mathcal G(n,q)\,&\ni (\x_0-1)\circ(\x_0^{e_0}\cdots\x_{n-1}^{e_{n-1}})\circ(\x_0+1)\cr
&=(\x_0-1)\circ\frac{\prod_{i\in I}\x_i^{e_i}}{\prod_{j\in J}\x_j^{-e_j}}\circ(\x_0+1)\cr
&=\frac{\prod_{i\in I}\x_i^{e_i}-\prod_{j\in J}\x_j^{-e_j}}{\prod_{j\in J}\x_j^{-e_j}}\circ(\x_0+1)\cr
&=\frac{\prod_{i\in I}(\x_i+1)^{e_i}-\prod_{j\in J}(\x_j+1)^{-e_j}}{\prod_{j\in J}(\x_j+1)^{-e_j}}\cr
&=\frac{\prod_{i\in I}\x_i^{e_i}+\cdots+e_0\x_0}{\prod_{j\in J}\x_j^{-e_j}+\cdots+1}.
\end{align*}
The above element has $d_{\max}=e$ and $d_{\min}=1$.
\end{proof}


\end{document}